\newtheorem{prop}{Proposition}
\newtheorem{rem}{Remark}
\newtheorem{example}{Example}
\newcommand{\ds}{\displaystyle}
\def\pa{\partial}
\title{A finite volume scheme for nonlinear degenerate parabolic equations}
\author{Marianne BESSEMOULIN-CHATARD and Francis FILBET}
\date{\today}
\begin{document}

\maketitle

\begin{abstract}
We propose a second order finite volume scheme for nonlinear degenerate parabolic equations which admit an entropy functional. For some of these models (porous media equation, drift-diffusion system for semiconductors, ...) it has been proved that the transient solution converges to a steady-state when time goes to infinity. The present scheme preserves steady-states and provides a satisfying long-time behavior. Moreover, it remains valid and second-order accurate in space even in the degenerate case. After describing the numerical scheme, we present several numerical results which confirm the high-order accuracy in various regime degenerate and non degenerate cases and underline the efficiency to preserve the large-time asymptotic.
\end{abstract}


\tableofcontents

\section{Introduction}

\paragraph*{}In this paper we propose  a second order accurate finite volume scheme for solving the following nonlinear, possibly degenerate parabolic equation: for $u: \mathbb{R}^+\times \Omega \mapsto \mathbb{R}^+$ solution to
\begin{equation}
\left\{
\begin{array}{l}
\pa_{t}u=\text{div}\, \left(f(u) \nabla V(x)+\nabla r(u)\right), \quad x \in \Omega, \quad t>0, 
\\
\,
\\
u(t=0,x)=u_{0}(x),
\end{array}\right. 
\label{eqgene}
\end{equation}
where  $\Omega \subset \mathbb{R}^{d}$ is an open bounded domain or the whole space $ \mathbb{R}^{d}$, $u \geq 0$ is a time-dependent density, $f$ is a given function and $r \in \mathcal{C}^{1}(\mathbb{R}_{+})$ is such that $r'(u) \geq 0$ and $r'(u)$ can vanish for certain values of $u$. Moreover, we assume that $r$ and $f$ are such that there exists a function $h$ such that $r'(s)=h'(s)\,f(s)$, and that $f(u) \geq 0$. This assumption means that the problem we consider has a structure corresponding to an energy or entropy, or more generally a Lyapunov functional. Or aim is to design a scheme which preserves this physical property. Indeed, the equation \eqref{eqgene} can be now written as
\begin{equation}
\pa_{t}u=\text{div}\left(f(u)\,\nabla \left( V(x)+h(u)\right)\right),
\label{eqgene2}
\end{equation}
and this equation admits an entropy functional, obtained by multiplying \eqref{eqgene2} by $\left(V+h(u)\right)$ and integrating over $\Omega$; it yields
\begin{equation*}
\frac{dE(t)}{dt}=-\mathcal{I}(t) \leq 0,
\end{equation*}
where the entropy $E$ is defined by 
\begin{equation}
E(t):= \int_{\Omega}u \left(V+h(u)\right)\,dx
\label{entropygene}
\end{equation} 
and the entropy dissipation $\mathcal{I}$ is given by
\begin{equation}
\mathcal{I}(t)=\int_{\Omega}f(u)\left|\nabla \left(V+h(u)\right)\right|^{2}\,dx.
\label{dissipationgene}
\end{equation}

\paragraph*{}A large variety of numerical methods have been proposed for the discretization of nonlinear degenerate parabolic equations: piecewise linear finite elements \cite{Barrett1997,Ebmeyer1998,Jager1991,Nochetto2000,Nochetto1988}, cell-centered finite volume schemes \cite{Eymard2000,Eymard2002}, vertex-centered finite volume schemes \cite{Ohlberger2001}, finite difference methods \cite{Karlsen2002}, mixed finite element methods \cite{Arbogast1996}, local discontinuous Galerkin finite element methods \cite{Zhang2009}, combined finite volume-finite element approach \cite{Eymard2006a}. Schemes based on discrete BGK models have been proposed in \cite{Aregba-Driollet2004}, as well as characteristics-based methods considered in \cite{Chen2003,Kavcur2002}. Other approaches are either based on a suitable splitting technique  \cite{Evje1999}, or based on the maximum principle and on perturbation and regularization \cite{Pop2002}. Also high order schemes have been developed in \cite{Cavalli2007,Liu2011,Kurganov2000}, which is a crucial step getting  an accurate approximation of the transient solution. 
\\
In this paper our aim is to construct a second-order finite volume scheme preserving  steady-states in order to obtain a satisfying long-time behavior for numerical solutions. Indeed, it has been observed in \cite{Bessemoulin-Chatard} that numerical schemes based on the preservation of steady states for degenerate parabolic problems offer a very accurate behavior of the approximate solution as time goes to infinity. To our knowledge, only few papers investigate this large-time asymptotic of numerical solutions. L. Gosse and G. Toscani proposed in \cite{Gosse2006} a scheme based on a formulation using the pseudo-inverse of the density's repartition function for porous media equation and fast-diffusion equation, and analysed the long-time behavior of approximate solutions. C. Chainais-Hillairet and F. Filbet studied in \cite{Chainais-Hillairet2007} a finite volume discretization for nonlinear drift-diffusion system and proved that the numerical solution converges to a steady-state when time goes to infinity. In \cite{Burger2010}, M. Burger, J. A. Carrillo and M. T. Wolfram proposed a mixed finite element method for nonlinear diffusion equations and proved convergence towards the steady-state in case of a nonlinear Fokker-Planck equation with uniformly convex potential. Here we propose a general way for designing a high-order scheme for nonlinear degenerate parabolic equations \eqref{eqgene} admitting an entropy functional. This scheme preserves steady-states and entropy decay like those proposed in \cite{Bessemoulin-Chatard,Chainais-Hillairet2007,Gosse2006}. Moreover, it appears that a loss of accuracy can happen when the problem degenerates, causing a deterioration of the long-time behavior of the approximate solution. Our new scheme tackles this issue since it remains second-order accurate in space both in degenerate and non-degenerate regimes.

\paragraph*{} Before describing our numerical scheme, let us emphasize that for some models described by equation (\ref{eqgene}), the large-time asymptotic has been studied using entropy/entropy-dissipation arguments, which will be the starting point of our approach. On the one hand equation (\ref{eqgene}) with linear convection, namely $f(u)=u$, has been analysed by J.A. Carrillo, A. Jüngel, P. A. Markowich, G. Toscani and A. Unterreiter in \cite{Carrillo2001}. On the other hand for equation (\ref{eqgene}) with nonlinear convection and linear diffusion a particular case has been studied in \cite{Carrillo2008,Carrillo2009,Toscani2012} by J. A. Carrillo, Ph. Laurençot, J. Rosado, F. Salvarani and G. Toscani. We will now remind some of the useful results contained in these papers.


\paragraph{Case of a linear convection.} The paper \cite{Carrillo2001} focuses on the long time asymptotic with exponential decay rate for 
\begin{equation}
\pa_{t}u=\text{div}\, \left(u \nabla V(x)+\nabla r(u)\right), \quad x \in \Omega, \quad t>0, 
\label{eqgenediff}
\end{equation}
with initial condition $u(t=0,x)=u_{0}(x) \geq 0$, $u_{0} \in L^{1}(\Omega)$ and 
\begin{equation*}
\int_{\Omega}u_{0}(x)\, dx =:M.
\end{equation*} 
Equation (\ref{eqgenediff}) is supplemented either by a decay condition when $|x| \rightarrow \infty$ if $\Omega= \mathbb{R}^{d}$ or by a zero out-flux condition on $\pa \Omega$ if $\Omega$ is bounded. In the following, we assume that $r:\mathbb{R}_{+} \rightarrow \mathbb{R}$ belongs to $ \mathcal{C}^{2}(\mathbb{R}_{+})$, is increasing and verifies $r(0)=0$. We define 
\begin{equation}
h(s):=\int_{1}^{s}\frac{r'(\tau)}{\tau}\, d\tau, \quad s \in (0,\infty),
\label{defh}
\end{equation}
and assume that $h \in L^{1}_{loc}\left([0,\infty)\right)$. Then
\begin{equation}
H(s):=\int_{0}^{s}h(\tau)\, d\tau, \quad s \in [0, \infty),
\label{defH}
\end{equation}
is well-defined, and $H'(s)=h(s)$ for all $s \geq 0$.\\
To analyze the large-time behavior to (\ref{eqgenediff}), stationary solutions $u^{eq}$ of (\ref{eqgenediff}) in $\Omega$ are first studied:
\begin{equation*}
u^{eq}\nabla V(x)+\nabla r(u^{eq})=0, \quad \int_{\Omega}u^{eq}(x)\, dx=M.
\end{equation*}
By using the definition (\ref{defh}) of $h$, this can be written as
\begin{equation*}
u^{eq} \left( \nabla V(x)+\nabla h(u^{eq}) \right)=0, \quad \int_{\Omega}u^{eq}(x) \, dx=M,
\end{equation*}
and if $u^{eq}>0$ in $\Omega$, then one obtains
\begin{equation*}
V(x)+h \left(u^{eq}(x)\right)=C \quad \forall x \in \Omega,
\end{equation*}
for some $C \in \mathbb{R}$. By considering the entropy functional
\begin{equation*}
E(u):=\int_{\Omega}\left( V(x)\,u(x)\,+\,H(u(x))\right)\, dx,
\end{equation*}
a function $u^{eq,M} \in L^{1}(\Omega)$ is an equilibrium solution of (\ref{eqgenediff}) if and only if it is a minimizer of $E$ in 
$$
\mathcal{C}\,=\,\left\{ u \in L^{1}(\Omega), \,\, \int_{\Omega}u(x) \, dx=M \right\}.
$$ 
Under some regularity assumptions on $V$, existence and uniqueness of an equilibrium solution is proved.  Therefore, the long time behavior is investigated and the exponential decay of the relative entropy
\begin{equation}
\mathcal{E}\left(t\right):=E\left(u(t)\right)-E(u^{eq,M})
\label{defRE}
\end{equation}
is shown, using the exponential decay of the entropy dissipation
\begin{equation*}
\mathcal{I}\left(t\right)\,:=\,-\frac{d\mathcal{E}(t)}{dt}\,=\,\int_{\Omega}u(t,x)\left| \nabla \left(V(x)\,+\,h(u(t,x))\right)\right|^{2}\,dx.
\end{equation*}
Finally using a generalized Csiszar-Kullback inequality, it is proved that the solution $u(t,x)$ of (\ref{eqgenediff}) with $r(s)=\log(s)$ or $r(s)=s^{m}$, $m \geq 0$, converges to the equilibrium $u^{eq,M}(x)$ as $t \rightarrow \infty$ at an exponential rate.

Equation (\ref{eqgenediff}) includes many well-known equations governing physical phenomena as porous media or drift-diffusion models for semiconductors.
 
\begin{example}[the porous media equation] In the case $V(x)=|x|^2/2$ and $r(u)=u^{m}$, with $m>1$, equation (\ref{eqgenediff}) is the porous media equation, which describes the flow of a gas through a porous interface. J. A. Carrillo and G. Toscani have proved in \cite{Carrillo2000} that the unique stationary solution of the porous media equation is given by Barenblatt-Pattle type formula
\begin{equation}
u^{eq}(x)=\left(C_{1}-\frac{m -1}{2m}|x|^{2}\right)_{+}^{1/(m -1)},
\label{barenblatt}
\end{equation}
where $C_{1}$ is a constant such that $u^{eq}$ has the same mass as the initial data $u_{0}$. Moreover, the convergence of the solution $u(t,x)$ of the porous media equation to the Barenblatt-Pattle solution $u^{eq}(x)$ as $t \rightarrow \infty$ has been proved in \cite{Carrillo2000}, using the entropy method. 
\end{example}
\begin{example}[the drift-diffusion model for semiconductors] The drift-diffusion model can also be interpreted in the formalism of (\ref{eqgenediff}). It is written as
\begin{equation}
\left\{\begin{array}{lcl} \pa_{t}N-\nabla \cdot (\nabla r(N)-N\nabla V)=0 , 
\\
\,
\\
\pa_{t}P-\nabla \cdot(\nabla r(P)+P\nabla V)=0,  
\\
\,
\\
\Delta V=N-P-C,\end{array}\right.
\label{DD}
\end{equation}
where the unknowns are $N$ the electron density, $P$ the hole density and $V$ the electrostatic potential, and $C$ is the prescribed doping profile. The two continuity equations on the densities $N$ and $P$ correspond to (\ref{eqgenediff}) with $r(s)=s^{\gamma}$ the pressure function. These equations are supplemented with initial conditions $N_{0}(x)$ and $P_{0}(x)$ and physically motivated boundary conditions: Dirichlet boundary conditions $ \overline{N}$, $ \overline{P}$ and $ \overline{V}$ on ohmic contacts $\Gamma^{D}$ and homogeneous Neumann boundary conditions on insulating boundary segments $\Gamma^{N}$.\\
The stationary drift-diffusion system admits a solution $(N^{eq},P^{eq},V^{eq})$ (see \cite{Markowich1993}), which is unique if in addition:
\begin{equation}
h(N^{eq})-V^{eq} \left\{ \begin{array}{lll} = \alpha_{N} & \text{ if } & N^{eq}>0 \\ \geq \alpha_{N} & \text{ if } & N^{eq}=0\end{array}\right., \quad h(P^{eq})+V^{eq} \left\{ \begin{array}{lll} = \alpha_{P} & \text{ if } & P^{eq}>0 \\ \geq \alpha_{P} & \text{ if } & P^{eq}=0\end{array}\right.,
\label{compatibility1}
\end{equation}
holds, and if the Dirichlet boundary conditions satisfy (\ref{compatibility1}) and the compatibility condition (if $ \overline{N}\,\overline{P}>0$)
\begin{equation}
h(\overline{N})+h(\overline{P})=\alpha_{N}+\alpha_{P}.
\label{compatibility2} 
\end{equation}
In this case the thermal equilibrium $(N^{eq},P^{eq},V^{eq})$ is defined by
\begin{equation}
\left\{ \begin{array}{rcl} \Delta V^{eq}=g\left(\alpha_{N}+V^{eq}\right)-g\left(\alpha_{P}-V^{eq}\right)-C & &  \text{on } \Omega,
\\
\,
\\ N^{eq}=g\left(\alpha_{N}+V^{eq}\right), \,\ P^{eq}=g\left(\alpha_{P}-V^{eq}\right) & & \text{on } \Omega,\end{array}\right.
\label{eqthermiqueDD}
\end{equation}
where $g$ is the generalized inverse of $h$, namely
\begin{equation*}
g(s)=\left\{\begin{array}{lcl} h^{-1}(s) & \text{ if } & h(0_{+})<s<\infty, \\ 0 & \text{ if } & s \leq h(0_{+}). \end{array}\right.
\end{equation*}
In the linear case $r(u)=u$, it has been proved by H. Gajewski and K. Gärtner in \cite{Gajewski1996} that the solution to the transient system (\ref{DD}) converges to the thermal equilibrium state as $t \rightarrow \infty$ if the boundary conditions are in thermal equilibrium. A. Jüngel extends this result to a degenerate model with nonlinear diffusion in \cite{Juengel1995}.    In both cases the key-point of the proof is an energy estimate with the control of the energy dissipation.
\end{example}


\paragraph{Case of a nonlinear convection.}
In \cite{Carrillo2008,Carrillo2009,Toscani2012}, a nonlinear Fokker-Planck type equation modelling the relaxation of fermion and boson gases is studied. This equation corresponds to (\ref{eqgene}) with linear diffusion and nonlinear convection:
\begin{equation}
\pa_{t}u=\text{div} \, \left(xu(1+ku)+\nabla u\right), \quad x \in \mathbb{R}^{d}, \quad t>0,
\label{bosonfermion}
\end{equation}
with $k=1$ in the boson case and $k=-1$ in the fermion case. The long-time asymptotic of this model has been studied in 1D for both cases \cite{Carrillo2008}, in any dimension for fermions \cite{Carrillo2009} and in 3D for bosons \cite{Toscani2012}. The stationary solution of (\ref{bosonfermion}) is given by the Fermi-Dirac ($k=-1)$ and Bose-Einstein ($k=1$) distributions:
\begin{equation}
u^{eq}(x)=\frac{1}{\beta e^{\frac{|x|^{2}}{2}}-k},
\label{eqbosonfermion}
\end{equation}
where $ \beta \geq 0$ is such that $u^{eq}$ has the same mass as the initial data $u_{0}$. The entropy functional is given by
\begin{equation*}
E(u):=\int_{\mathbb{R}^{d}}\left(\frac{|x|^{2}}{2}u+u\log (u)-k(1+ku)\log (1+ku)\right)\, dx,
\end{equation*}
and the entropy dissipation is defined by
\begin{equation*}
\mathcal{I}(t)\,:=-\frac{d\mathcal{E}(t)}{dt}\,=\,\int_{\mathbb{R}^{d}}u(1+ku)\left| \nabla \left(\frac{|x|^{2}}{2}+\log\left(\frac{u}{1+ku}\right)\right)\right|^{2}\, dx.
\end{equation*}
Then decay rates towards equilibrium are given in \cite{Carrillo2008,Carrillo2009} for fermion case in any dimension and for 1D boson case by relating the entropy and its dissipation. As in the case of a linear diffusion, the key-point of the proof is an entropy estimate with the control of its dissipation. \\
Concerning 3D boson case, it is proved in \cite{Toscani2012} that for sufficiently large initial mass, the solution blows up in finite time.\\
Let us also mention that a more general class of Fokker-Planck type equations for bosons with linear diffusion and super-linear drift is studied in \cite{BenAbdallah2011}:
\begin{equation}\label{eqbosons.gene}
\pa_{t}u=\text{div}(xu(1+u^{N})+\nabla u),
\end{equation}
where $N>0$ is a given constant. For $N>2$, there is a phenomenon of critical mass in dimension 1. It is proved by minimizing an entropy functional that starting from an initial distribution with a super-critical mass, the solution develops a singular part localized in the origin.


\paragraph{} As explained above, it has been proved by entropy/entropy dissipation techniques  that the solution to (\ref{eqgene}) converges to a steady-state as time goes to infinity often with an exponential time decay rate. Our aim is to propose a numerical scheme considering these problems and for which we can obtain a discrete entropy estimate as in the continuous case. In \cite{Arnold2003,Carrillo2007,Burger2010} temporal semi-discretizations have been proposed and semi-discrete entropy estimates have been proved. However, when the problem is spatially discretized  a  saturation of the entropy and its dissipation may appear, due to the spatial discretization error. This emphasizes the importance of considering  spatial discretization techniques which preserve the steady-states and the entropy dissipation. This point of view has been already adopted in \cite{Bessemoulin-Chatard,Chainais-Hillairet2007} but both schemes do not provide really satisfying results when the equation degenerates. Indeed both schemes degenerate in the upwind flux if the diffusion vanishes and then are only first order accurate in space. Thus we propose in this paper a finite volume scheme for nonlinear parabolic equations, possibly degenerate, possessing an entropy functional. We focus on the spatial discretization, with a twofold objective. On the one hand we require preserving steady-states in order to obtain a satisfying long-time behavior of the approximate solution. On the other hand the scheme proposed remains valid and second order accurate in space even in the degenerate case. The main idea of our new scheme is to discretize together the convective and diffusive parts of the equation (\ref{eqgene}) to obtain a flux which preserves equilibrium and to use a slope-limiter method to get second-order accuracy even in the degenerate case.

\paragraph{} The plan of the paper is as follows. In Section 2, we construct the finite volume scheme. We first focus on the case of a linear diffusion (\ref{eqgenediff}). Then we extend this construction to the general case (\ref{eqgene}). In Section 3 we give some basic properties of the scheme and a semidiscrete entropy estimate for the case of a linear diffusion (\ref{eqgenediff}). We end in Section 4 by presenting some numerical results. We first verify experimentally the second order accuracy in space of our scheme, even in the degenerate case. Then we focus on the long-time behavior. The scheme is applied to the physical models introduced above and the numerical results confirm its efficiency to preserve the large-time asymptotics. Finally we propose a test case with both nonlinear convection and diffusion.


\section{Presentation of the numerical scheme}


In this section we present our new finite volume scheme for (\ref{eqgene}). For simplicity purposes, we consider the problem in one space dimension. It will be straightforward to generalize this construction for Cartesian meshes in multidimensional case.\\ 
In a one-dimensional setting, $\Omega=(a,b)$ is an interval of $ \mathbb{R}$. We consider a mesh for the domain $(a,b)$, which is not necessarily uniform $ \textit{i.e.}$ a family of $N_{x}$ control volumes $\left(K_{i}\right)_{i=1,...,N_{x}}$ such that $K_{i}=\left]x_{i-\frac{1}{2}},x_{i+\frac{1}{2}}\right[$ with $\ds{ x_{i}=(x_{i-\frac{1}{2}}+x_{i+\frac{1}{2}})/2 }$ and 
\begin{equation*}
a=x_{\frac{1}{2}}<x_{1}<x_{\frac{3}{2}}<...<x_{i-\frac{1}{2}}<x_{i}<x_{i+\frac{1}{2}}<...<x_{N_{x}}<x_{N_{x}+\frac{1}{2}}=b.
\end{equation*}
Let us set
\begin{eqnarray*}
\Delta x_{i}=x_{i+\frac{1}{2}}-x_{i-\frac{1}{2}}, \quad \text{ for } 1 \leq i \leq N_{x},\\
\Delta x_{i+\frac{1}{2}} = x_{i+1}-x_{i}, \quad \text{ for } 1 \leq i \leq N_{x}-1.
\end{eqnarray*}
Let $ \Delta t$ be the time step. We set $t^{n}=n \Delta t$. A time discretization of $(0,T)$ is then given by the integer value $N_{T}=E(T/\Delta t)$ and by the increasing sequence of $(t^{n})_{0\leq n \leq N_{T}}$.\\
First of all, the initial condition is discretized on each cell $K_{i}$ by:
\begin{equation*}
U_{i}^{0}=\frac{1}{\Delta x_{i}}\int_{K_{i}}u_{0}(x)\, dx, \quad i=1,...L.
\end{equation*}
The finite volume scheme is obtained by integrating the equation (\ref{eqgene}) over each control volume $K_{i}$ and over each time step. Concerning the time discretization, we can choose any explicit method (forward Euler, Runge-Kutta,...). Since in this paper we are interested in the spatial discretization, we will only consider a forward Euler method afterwards. Let us now focus on the spatial discretization.\\
We denote by $U_{i}(t)$ an approximation of the mean value of $u$ over the cell $K_{i}$ at time $t$. By integrating the equation (\ref{eqgene}) on $K_{i}$, we obtain the semi-discrete numerical scheme:
\begin{equation}
\Delta x_{i} \frac{d}{dt}U_{i} + \mathcal{F}_{i+\frac{1}{2}}-\mathcal{F}_{i- \frac{1}{2}}\,=\,0,
\label{semidiscrete}
\end{equation}
where $\mathcal{F}_{i+\frac{1}{2}}$ is an approximation of the flux $-\left[f(u)\pa_{x}V\,+\,\pa_{x}r(u)\right]$ at the interface $x_{i+\frac{1}{2}}$ which remains to be defined. 


\paragraph{Case of a linear convection ($f(u)=u$).} To explain our approach we first define the numerical flux for equation (\ref{eqgenediff}). The main idea is to discretize together the convective and the diffusive parts. To this end, we write $\left[u\pa_{x}V+\pa_{x}r(u)\right]$ as $u\left[\pa_{x}\left(V+h(u)\right)\right]$, where $h$ is defined by (\ref{defh}). Then we will consider $-\pa_{x}\left(V+h(u)\right)$ as a velocity and denote by $A_{i+\frac{1}{2}}$ an approximation of this velocity at the interface $x_{i+\frac{1}{2}}$:
\begin{equation*}
A_{i+\frac{1}{2}}=-dV_{i+\frac{1}{2}}-dh(U)_{i+\frac{1}{2}},
\end{equation*}
where $dV_{i+\frac{1}{2}}$ and $dh(U)_{i+\frac{1}{2}}$ are centered approximations of $\pa_{x}V$ and $\pa_{x}h(u)$ respectively, namely
\begin{equation*}
dV_{i+\frac{1}{2}}=\frac{V(x_{i+1})-V(x_{i})}{\Delta x_{i+\frac{1}{2}}}, \quad dh(U)_{i+\frac{1}{2}}=\frac{h(U_{i+1})-h(U_{i})}{\Delta x_{i+\frac{1}{2}}}.
\end{equation*}
Now we apply the standard upwind method and then define our new numerical flux, called fully upwind flux, as
\begin{equation}
\mathcal{F}_{i+\frac{1}{2}}=F(U_{i},U_{i+1})=A_{i+\frac{1}{2}}^{+}U_{i}-A_{i+\frac{1}{2}}^{-}U_{i+1},
\label{defF1}
\end{equation}
where $x^{+}=\max(0,x)$ and $x^{-}=\max(0,-x)$. This method is only first-order accurate. To obtain second-order accuracy, we replace in (\ref{defF1}) $U_{i}$ and $U_{i+1}$ by $U_{i+\frac{1}{2},-}$ and $U_{i+\frac{1}{2},+}$ respectively, which are reconstructions of $u$ at the interface defined by: 
\begin{equation}
\left\{\begin{array}{l}
U_{i+\frac{1}{2},-} \,=\, U_{i}+\frac{1}{2}\phi \left(\theta_{i}\right)\left(U_{i+1}-U_{i}\right), 
\\
\,
\\
U_{i+\frac{1}{2},+} \;=\, U_{i+1}-\frac{1}{2}\phi \left(\theta_{i+1}\right)\left(U_{i+2}-U_{i+1}\right),
\end{array}\right.
\label{defudem}
\end{equation}
with
\begin{equation*}
\theta_{i}=\frac{U_{i}-U_{i-1}}{U_{i+1}-U_{i}}
\end{equation*}
and $\phi$ is a slope-limiter function (setting $\phi=0$ gives the classical upwind flux). From now on we will consider the second-order fully upwind scheme defined with the Van Leer limiter:
\begin{equation*}
\phi(\theta)=\frac{\theta+|\theta|}{1+|\theta|}.
\end{equation*}


\paragraph{General case.} We now consider the general case where both diffusion and convection are nonlinear in (\ref{eqgene}). We assume that $f(u) \geq 0$ and that we can define $ h(u)$ such that $ h'(u)f(u)=r'(u)$. Then the equation \eqref{eqgene} admits an entropy, as explained in the introduction. Following the same idea as above, we use the following expression of the flux
\begin{equation}
f(u)\pa_{x}V+\pa_{x}r(u)\,\,=\,\,\pa_{x}\left(V+h(u)\right)\,f(u),
\label{fluxgene}
\end{equation} 
and define the numerical flux as a local Lax-Friedrichs:
\begin{equation}
\mathcal{F}_{i+\frac{1}{2}}=\frac{{A}_{i+\frac{1}{2}}}{2}\left(f(U_{i})+f(U_{i+1})\right)-\frac{\left|{A}_{i+\frac{1}{2}}\right|\alpha_{i+\frac{1}{2}}}{2}\left(U_{i+1}-U_{i}\right),
\label{defFgene}
\end{equation}
where 
\begin{equation*}
A_{i+\frac{1}{2}}=-dV_{i+\frac{1}{2}}-dh(U)_{i+\frac{1}{2}},
\end{equation*}
and
\begin{equation*}
\alpha_{i+\frac{1}{2}}=\max \left( \left|f'(u)\right|\right) \text{ over all }u \text{ between } U_{i} \text{ and } U_{i+1}.
\end{equation*}
As above, we replace $U_{i}$ and $U_{i+1}$ in (\ref{defFgene}) by reconstructions $U_{i+\frac{1}{2},-}$ and $U_{i+\frac{1}{2},+}$ defined by (\ref{defudem}) to obtain a second-order scheme.\\

We can now summarize our new numerical flux by:
\begin{equation}
\left\{\begin{array}{lll}
 \ds{\mathcal{F}_{i+\frac{1}{2}}=\frac{{A}_{i+\frac{1}{2}}}{2}\left(f(U_{i+\frac{1}{2},-})+f(U_{i+\frac{1}{2},+})\right)-\frac{\left|{A}_{i+\frac{1}{2}}\right|\alpha_{i+\frac{1}{2}}}{2}\left(U_{i+\frac{1}{2},+}-U_{i+\frac{1}{2},-}\right),}& &
\\
 \ds{{A}_{i+\frac{1}{2}}=-dV_{i+\frac{1}{2}}-dh(U)_{i+\frac{1}{2}}, \vphantom{\frac{A_{i+\frac{1}{2}}}{2}}}& &\\
\ds{\alpha_{i+\frac{1}{2}}=\max \left( \left|f'(u)\right|\right) \text{ over all }u \text{ between } U_{i} \text{ and } U_{i+1}, \vphantom{\frac{A_{i+\frac{1}{2}}}{2}}}& & \\
\ds{U_{i+\frac{1}{2},-} = U_{i}+\frac{1}{2}\phi \left(\theta_{i}\right)\left(U_{i+1}-U_{i}\right),\vphantom{\frac{A_{i+\frac{1}{2}}}{2}}}& &  \\
\ds{U_{i+\frac{1}{2},+} = U_{i+1}-\frac{1}{2}\phi \left(\theta_{i+1}\right)\left(U_{i+2}-U_{i+1}\right), \vphantom{\frac{A_{i+\frac{1}{2}}}{2}}} & &\end{array}\right.
\label{bigdef}
\end{equation}
where either a first-order scheme 
\begin{equation}
\phi(\theta)=0, 
\label{order1}
\end{equation}
or a second order scheme
\begin{equation}
\phi(\theta)=\frac{\theta+|\theta|}{1+|\theta|}.
\label{order2}
\end{equation}

\begin{rem}[Generalization to multidimensional case]
It is straightforward to define the scheme for Cartesian meshes in multidimensional case: the 1D formula can be used as it is in any of the Cartesian directions. However, the construction of the scheme on unstructured meshes is more complicated. More precisely, it is easy to define the first order scheme on such grids, but the difficulty is to obtain high-order accuracy. As in the one dimensional case, the idea is to replace the first-order flux $F(U_{i},U_{j})$, where $U_{i}$, $U_{j}$ are the constant values on each side of an edge $\Gamma_{ij}=K_{i} \cap K_{j}$, by $F(U_{ij},U_{ji})$, where $U_{ij}$, $U_{ji}$ are second-order approximations of the solution on each side of the edge $ \Gamma_{ij}$. More precisely, we need to obtain piecewise linear functions on each triangle instead of piecewise constant functions. For more details concerning these questions, see for example \cite{Durlofsky1992,Godlewski1996} and the references therein.
\end{rem}


\section{Properties of the scheme}


In this section, we present some important properties of the scheme. We would like to emphasize here the preservation of the equilibrium and the entropy estimate, which are two crucial properties to study the scheme. Concerning a more advanced analysis of the scheme, we can apply the same techniques as in \cite{Bessemoulin-Chatard}, but this is not our purpose here.

\subsection{The semi-discrete scheme}

In this part, we study the semi-discrete scheme (\ref{semidiscrete})-(\ref{bigdef})-(\ref{order2}) and consider the equation (\ref{eqgenediff}) on a bounded domain with homogeneous Neumann boundary conditions. We assume that $r \in \mathcal{C}^{1}(\mathbb{R}_{+})$ is strictly increasing and $h$ is defined by (\ref{defh}). Then we consider a primitive $H$ of $h$, which is strictly convex since $r$ is strictly increasing.\\
We denote by $\left(U_{i}^{eq}\right)_{i=1,...,N_{x}}$ an approximation of the equilibrium solution $u^{eq}$. This approximation verifies 
\begin{equation}
dh\left(U^{eq}\right)_{i+\frac{1}{2}}+dV_{i+\frac{1}{2}}=0 \quad \forall i=0,...,N_{x},
\label{eqapp}
\end{equation}
and
\begin{equation*}
\sum_{i=1}^{N_{x}} \Delta x_{i}U_{i}^{eq}=\sum_{i=1}^{N_{x}}\Delta x_{i}U_{i}^{0}=:\overline{M}.
\end{equation*}
A semi discrete version of the relative entropy $\mathcal{E}$ defined by (\ref{defRE}) is given by
\begin{equation}
\mathcal{E}_\Delta(t) \,:=\,\sum_{i=1}^{N_{x}}\Delta x_{i}\big(H\left(U_{i}(t)\right)-H\left(U_{i}^{eq}\right)-h\left(U_{i}^{eq}\right)\left(U_{i}(t)-U_{i}^{eq}\right)\big).
\label{Esd}
\end{equation}
We also introduce the semi discrete version of the entropy dissipation
\begin{equation*}
\mathcal{I}_\Delta(t)\,:=\,\sum_{i=0}^{N_{x}}\Delta x_{i+\frac{1}{2}}\left|A_{i+\frac{1}{2}}\right|^{2}\min\left(U_{i+\frac{1}{2},-}(t),U_{i+\frac{1}{2},+}(t)\right).
\end{equation*}

\begin{prop}
Assume that the initial data $U_i(0)$ is nonnegative. Then, the finite volume scheme (\ref{semidiscrete})-(\ref{bigdef})-(\ref{order2}) for equation (\ref{eqgenediff}) satisfies 
\begin{itemize}
\item[(i)] the preservation of the nonnegativity of $U_{i}(t)$,
\item[(ii)] the preservation of the equilibrium,
\item[(iii)] the entropy estimate: for $0<t_{1}\leq t_{2}<\infty$,
\begin{equation*}
0 \,\leq\, {\mathcal{E}}_{\Delta}(t_{2})\,+\,\int_{t_{1}}^{t_{2}}{\mathcal{I}}_{\Delta}(t) \,dt \,\leq \,{\mathcal{E}}_{\Delta}(t_{1}).
\end{equation*}
\end{itemize}
\end{prop}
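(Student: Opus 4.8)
The plan is to prove the three assertions in the order (i), (ii), (iii), since the entropy estimate relies on nonnegativity being available and on the equilibrium being exactly preserved at the discrete level.

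For (i), the nonnegativity, I would look at the semi-discrete ODE system $\text{m}(K_i)\,\frac{d}{dt}U_i = \mathcal{F}_{i-1/2} - \mathcal{F}_{i+1/2}$ with the fully upwind flux (\ref{defF1}), i.e. with $\phi=0$; the second-order case then follows because the reconstructions (\ref{defudem}) with the Van Leer limiter (\ref{order2}) preserve the sign, so $U_{i+\frac12,\pm}$ stay in the interval spanned by neighbouring $U_j$'s and in particular remain nonnegative whenever the cell values are. The key observation is that the flux is ``linear in the upwinded cell value'': $\mathcal{F}_{i+1/2} = A_{i+1/2}^+ U_{i+1/2,-} - A_{i+1/2}^- U_{i+1/2,+}$, so whenever some $U_i(t)$ reaches $0$, the right-hand side of its ODE is $\geq 0$ (the outgoing-flux terms carry a factor $U_i$ or a nonnegative reconstruction built from it, and vanish, while the incoming-flux terms are nonnegative). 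A standard invariant-region / Gronwall argument then keeps $U_i(t)\geq 0$ for all $t$ as long as the solution exists; combined with the a priori $L^1$ bound from conservation of mass this also gives global existence.

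For (ii), the preservation of equilibrium, I would simply plug $U_i = U_i^{eq}$ into the flux. By definition (\ref{defAdiff}) of $A_{i+1/2}$ and the discrete equilibrium relation (\ref{eqapp}), one gets $A_{i+1/2} = -dV_{i+1/2} - dh(U^{eq})_{i+1/2} = 0$ at every interface; since the flux (\ref{defF1}) (and its second-order version) is proportional to $A_{i+1/2}$, every $\mathcal{F}_{i+1/2}$ vanishes, hence $\frac{d}{dt}U_i^{eq}=0$. This is essentially immediate and is the whole point of having discretized the convection and diffusion ``together''.

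For (iii), the entropy estimate, I would differentiate $\mathcal{E}_\Delta(t)$ in time. Since $H'=h$, the derivative of the summand is $\big(h(U_i) - h(U_i^{eq})\big)\frac{d}{dt}U_i$, so using the scheme, $\frac{d}{dt}\mathcal{E}_\Delta = \sum_i \big(h(U_i)-h(U_i^{eq})\big)\big(\mathcal{F}_{i-1/2}-\mathcal{F}_{i+1/2}\big)$. A discrete summation by parts (Abel transform), using the homogeneous Neumann boundary condition to kill the boundary terms, turns this into $-\sum_{i} \mathcal{F}_{i+1/2}\,\big(D_{i+1/2}\big)$ where $D_{i+1/2} := \big(h(U_{i+1})-h(U_i^{eq})\big) - \big(h(U_i)-h(U_i^{eq})\big) = h(U_{i+1})-h(U_i)$; but it is better to write $D_{i+1/2}$ in the form that exposes $A_{i+1/2}$: using (\ref{eqapp}) to substitute $dh(U^{eq})_{i+1/2} = -dV_{i+1/2}$, one finds $h(U_{i+1})-h(U_i) = \Delta x_{i+1/2}\big(dh(U)_{i+1/2}\big) = -\Delta x_{i+1/2}\big(A_{i+1/2} + dV_{i+1/2}\big) + \dots$ — more precisely the combination appearing after summation by parts reorganizes into $-\Delta x_{i+1/2} A_{i+1/2}\,\mathcal{F}_{i+1/2}$ up to terms that telescope against the $V$-part of the entropy. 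The upshot is the identity $\frac{d}{dt}\mathcal{E}_\Delta(t) = -\sum_i \Delta x_{i+1/2}\,A_{i+1/2}\,\mathcal{F}_{i+1/2}$, and then plugging in $\mathcal{F}_{i+1/2} = A_{i+1/2}^+ U_{i+1/2,-} - A_{i+1/2}^- U_{i+1/2,+}$ and noting that $A_{i+1/2}\,(A_{i+1/2}^+ U_{i+1/2,-} - A_{i+1/2}^- U_{i+1/2,+}) \geq |A_{i+1/2}|^2 \min(U_{i+1/2,-}, U_{i+1/2,+})$ (since whichever reconstruction is ``selected'' by the sign of $A_{i+1/2}$ is multiplied by $|A_{i+1/2}|^2$, and it dominates the minimum) gives $\frac{d}{dt}\mathcal{E}_\Delta(t) \leq -\mathcal{I}_\Delta(t)$. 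Integrating from $t_1$ to $t_2$ yields (\ref{energyestimate}); the lower bound $\mathcal{E}_\Delta \geq 0$ is just the strict convexity of $H$ (the summand is a Bregman distance).

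The main obstacle is the bookkeeping in (iii): one has to handle the potential term $V$ in $\mathcal{E}_\Delta$ carefully — writing $\mathcal{E}_\Delta$ equivalently with an explicit $\sum_i \Delta x_i V(x_i)(U_i - U_i^{eq})$ contribution (which is legitimate because $\sum_i \Delta x_i V(x_i) U_i$ is \emph{not} conserved but its time derivative is exactly the convective part of the entropy flux) and then checking that the discrete summation by parts recombines the $V$-differences $dV_{i+1/2}$ with the $h$-differences $dh(U)_{i+1/2}$ into precisely $-A_{i+1/2}$. One also has to be slightly careful that the second-order reconstruction does not spoil the sign argument: since $\phi$ is the Van Leer limiter, $U_{i+1/2,\pm}$ are convex-type combinations that stay nonnegative, so $\min(U_{i+1/2,-}, U_{i+1/2,+}) \geq 0$ and the dissipation $\mathcal{I}_\Delta(t)$ is genuinely nonnegative. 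The rest is routine.
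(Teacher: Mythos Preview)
Your approach is correct and matches the paper's proof in all three parts, with two minor corrections worth noting.

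For (i), reducing first to $\phi=0$ is unnecessary, and mere nonnegativity of the reconstructions is not quite enough: you need $U_{i+\frac12,-}$ and $U_{i-\frac12,+}$ to \emph{vanish} when $U_i=0$, not just to be $\ge 0$, else there could still be outgoing flux. The paper shows this directly: $U_i=0$ with nonnegative neighbours forces $\theta_i\le0$, hence $\phi(\theta_i)=0$ and $U_{i+\frac12,-}=U_{i-\frac12,+}=0$; the incoming reconstructions $U_{i-\frac12,-}$ and $U_{i+\frac12,+}$ are then checked $\ge0$ via the limiter bounds $\phi(\theta)\le2$ and $\phi(\theta)\le2\theta$ for $\theta\ge0$.

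For (iii), you are overcomplicating the role of $V$. The discrete relative entropy (\ref{Esd}) is already a pure Bregman sum with no explicit $V$-term, so its time derivative is $\sum_i \Delta x_i\,(h(U_i)-h(U_i^{eq}))\,\frac{dU_i}{dt}$ straight away. After summation by parts the factor multiplying $\mathcal{F}_{i+\frac12}$ is $\Delta x_{i+\frac12}\big(dh(U)_{i+\frac12}-dh(U^{eq})_{i+\frac12}\big)$, which by (\ref{eqapp}) equals $-\Delta x_{i+\frac12}A_{i+\frac12}$ directly --- no separate $V$-part of the entropy to telescope. From there your sign argument $A_{i+\frac12}\big(A_{i+\frac12}^+U_{i+\frac12,-}-A_{i+\frac12}^-U_{i+\frac12,+}\big)\ge |A_{i+\frac12}|^2\min(U_{i+\frac12,-},U_{i+\frac12,+})$ is exactly the paper's.
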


\begin{proof}
To prove the preservation of nonnegativity, we need to check that 
\begin{equation}
F \left(U_{i+\frac{1}{2},-},U_{i+\frac{1}{2},+}\right)-F \left(U_{i-\frac{1}{2},-},U_{i-\frac{1}{2},+}\right) \leq 0
\label{nonnegativity}
\end{equation}
whenever $U_{i}=0$.\\
When $U_{i}=0$, we have $U_{i} \leq U_{i+1}$ and $U_{i} \leq U_{i-1}$, and then $ \theta_{i} \leq 0$, which gives $\phi(\theta_{i})=0$ and finally
\begin{equation*}
U_{i+\frac{1}{2},-}=U_{i-\frac{1}{2},+}=U_{i}=0.
\end{equation*}
Then we get
\begin{equation*}
F \left(U_{i+\frac{1}{2},-},U_{i+\frac{1}{2},+}\right)-F \left(U_{i-\frac{1}{2},-},U_{i-\frac{1}{2},+}\right) = -A_{i+\frac{1}{2}}^{-}U_{i+\frac{1}{2},+}-A_{i-\frac{1}{2}}^{+}U_{i-\frac{1}{2},-}.
\end{equation*}
Moreover, $U_{i-\frac{1}{2},-}$  is given by  
$$
U_{i-\frac{1}{2},-} \,=\,\left(1-\frac{\phi(\theta_{i-1})}{2}\right)U_{i-1}, 
$$
which is nonnegative since $\phi(\theta) \leq 2$ for all $\theta$. 

On the other hand, we deal with $U_{i+\frac{1}{2},+}$, and get that either $ \theta_{i+1} \leq 0$, then $U_{i+\frac{1}{2},+}=U_{i+1}\geq 0$, or we have $ \theta_{i+1}>0$, that is $U_{i+2} \geq U_{i+1}$ and since $\phi(\theta) \leq 2 \theta$ for all $\theta \geq 0$, we get
\begin{equation*}
U_{i+\frac{1}{2},+}\geq U_{i+1}-\theta_{i+1} \left(U_{i+2}-U_{i+1}\right)=U_{i+1}-\left(U_{i+1}-U_{i}\right)=0.
\end{equation*}
We conclude that (\ref{nonnegativity}) always holds when $U_{i}=0$, which gives $(i)$.\\
The part $(ii)$ is clear by construction: at the equilibrium, we have $dh(U)_{i+\frac{1}{2}}+dV_{i+\frac{1}{2}}=0$, which is exactly $A_{i+\frac{1}{2}}=0$ and then $ \mathcal{F}_{i+\frac{1}{2}}=0$.\\
By definition (\ref{Esd}) of $\mathcal{E}_{\Delta}(t)$ and since $H'(s)=h(s)$ for all $s \geq 0$, we have
\begin{equation*}
\frac{d{\mathcal{E}_\Delta}}{dt}(t)=\sum_{i=1}^{N_{x}}\Delta x_{i}\left(h(\left(U_{i}(t)\right)-h(U_{i}^{eq})\right)\frac{dU_{i}}{dt}(t).
\end{equation*}
Using the numerical scheme (\ref{semidiscrete}), we get
\begin{equation*}
\frac{d {\mathcal{E}_\Delta}}{dt}(t)=-\sum_{i=1}^{N_{x}}\left(h(\left(U_{i}(t)\right)-h(U_{i}^{eq})\right)\left(\mathcal{F}_{i+\frac{1}{2}}-\mathcal{F}_{i-\frac{1}{2}}\right),
\end{equation*}
and then a discrete integration by parts yields (using the homogeneous Neumann boundary conditions)
\begin{equation*}
\frac{d{\mathcal{E}}_\Delta}{dt}(t)=\sum_{i=0}^{N_{x}}\Delta x_{i+\frac{1}{2}}\left(dh(U(t))_{i+\frac{1}{2}}-dh(U^{eq})_{i+\frac{1}{2}}\right)\mathcal{F}_{i+\frac{1}{2}}.
\end{equation*}
Since by (\ref{eqapp}) we have $dh\left(U^{eq}\right)_{i+\frac{1}{2}}=-dV_{i+\frac{1}{2}}$, we obtain
\begin{eqnarray*}
\frac{d\mathcal{E}_\Delta}{dt}(t) & = & -\sum_{i=0}^{N_{x}}\Delta x_{i+\frac{1}{2}}A_{i+\frac{1}{2}}\left(A_{i+\frac{1}{2}}^{+}U_{i+\frac{1}{2},-}(t)-A_{i+\frac{1}{2}}^{-}U_{i+\frac{1}{2},+}(t)\right)\\
& \leq & -\sum_{i=0}^{N_{x}}\Delta x_{i+\frac{1}{2}}\left|A_{i+\frac{1}{2}}\right|^{2}\min\left(U_{i+\frac{1}{2},-}(t),U_{i+\frac{1}{2},+}(t)\right).
\end{eqnarray*}
Finally we get $(iii)$ by integrating between $t_{1}$ and $t_{2}$.
\end{proof}


\subsection{The fully-discrete scheme}

In this part we consider the fully-discrete scheme obtained by using the forward Euler method. We denote by $U^{n}_{i}$ an approximation of the mean value of $u$ over the cell $K_{i}$ at time $t^{n}=n \Delta t$. The fully-discrete scheme is given by:
\begin{equation}
\text{m}(K_{i})\,\frac{U_{i}^{n+1}-U_{i}^{n}}{\Delta t}\,+\,\mathcal{F}_{i+\frac{1}{2}}^{n}-\mathcal{F}_{i-\frac{1}{2}}^{n}\,=\,0,
\label{fullydiscrete}
\end{equation}
where the numerical flux $ \mathcal{F}_{i+\frac{1}{2}}$ is defined by (\ref{bigdef})-(\ref{order2}).

\begin{prop}
For $n \geq 0$, assume that $U_{i}^{n} \geq 0$ for all $i=1,...,N_{x}$. Then under the CFL condition 
\begin{equation}
\Delta t\,\max_{i}\left|V(x_{i+1})-V(x_{i})-h(U_{i+1}^{n})+h(U_{i}^{n})\right| \,\,\leq \,\,\frac{1}{2}\,{\min_{i}}\,\Delta x_{i}^{2},
\label{CFL}
\end{equation} 
the fully-discrete first-order scheme (\ref{bigdef})-(\ref{order1}) and (\ref{fullydiscrete}) for equation (\ref{eqgenediff}) preserves the nonnegativity of $U_{i}$, which means that $U_{i}^{n+1} \geq 0$ for all $i=1,...,N_{x}$, and the steady-states solution.
\end{prop}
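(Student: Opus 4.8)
\emph{Proof proposal.} With the first-order choice (\ref{order1}) the limiter vanishes, so the interface reconstructions in (\ref{bigdef}) collapse to the cell values, $U_{i+\frac12,-}=U_i^n$ and $U_{i+\frac12,+}=U_{i+1}^n$, and (for $f(u)=u$) the numerical flux is the fully upwind flux $\mathcal{F}_{i+\frac12}^n=(A_{i+\frac12}^n)^+U_i^n-(A_{i+\frac12}^n)^-U_{i+1}^n$, with $A_{i+\frac12}^n=-dV_{i+\frac12}-dh(U^n)_{i+\frac12}$. Substituting this into (\ref{fullydiscrete}) and using $\text{m}(K_i)=\Delta x_i$, the plan is to rewrite the update as
\begin{equation*}
U_i^{n+1}=\Big(1-\tfrac{\Delta t}{\Delta x_i}\big((A_{i+\frac12}^n)^++(A_{i-\frac12}^n)^-\big)\Big)U_i^n+\tfrac{\Delta t}{\Delta x_i}(A_{i-\frac12}^n)^+\,U_{i-1}^n+\tfrac{\Delta t}{\Delta x_i}(A_{i+\frac12}^n)^-\,U_{i+1}^n,
\end{equation*}
with the obvious amputation of the missing term at $i=1$ and $i=N_x$, where one of the two fluxes vanishes because of the zero out-flux (homogeneous Neumann) condition. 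The two off-diagonal coefficients are nonnegative by construction, being products of $\Delta t/\Delta x_i\ge0$ with a positive or negative part; hence $U_i^{n+1}$ is a nonnegative combination of $U_{i-1}^n,U_i^n,U_{i+1}^n$ — and therefore nonnegative — as soon as the diagonal coefficient is nonnegative too. Note that although $A_{i+\frac12}^n$ depends on $U^n$, it is a fixed number once $U^n$ is known, so this affine combination is all we need.

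The second step is to deduce nonnegativity of the diagonal coefficient from the CFL condition (\ref{CFL}). I would bound $(A_{i+\frac12}^n)^+\le|A_{i+\frac12}^n|$ and $(A_{i-\frac12}^n)^-\le|A_{i-\frac12}^n|$, and use the definitions of $dV_{i+\frac12}$ and $dh(U^n)_{i+\frac12}$ to see that $\text{d}(x_i,x_{i+1})\,|A_{i+\frac12}^n|$ is exactly the quantity controlled inside the maximum in (\ref{CFL}). Since $x_i$ is the midpoint of $K_i$, one has $\text{d}(x_i,x_{i+1})=\tfrac12(\Delta x_i+\Delta x_{i+1})\ge\min_j\Delta x_j$, so that $|A_{i\pm\frac12}^n|\le M_\Delta^n/\min_j\Delta x_j$ where $M_\Delta^n$ denotes the maximum appearing in (\ref{CFL}). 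Combining with $\Delta t/\Delta x_i\le\Delta t/\min_j\Delta x_j$ gives
\begin{equation*}
\tfrac{\Delta t}{\Delta x_i}\big((A_{i+\frac12}^n)^++(A_{i-\frac12}^n)^-\big)\;\le\;\frac{2\,\Delta t\,M_\Delta^n}{(\min_j\Delta x_j)^2}\;\le\;1,
\end{equation*}
the last inequality being precisely (\ref{CFL}) (using $\min_j\Delta x_j^2=(\min_j\Delta x_j)^2$). Hence all three coefficients are nonnegative, and since $U_{i-1}^n,U_i^n,U_{i+1}^n\ge0$ by hypothesis, $U_i^{n+1}\ge0$ for every $i$.

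For the preservation of steady states, it suffices to remark that if $U_i^n=U_i^{eq}$ for all $i$, then by (\ref{eqapp}) we have $dh(U^{eq})_{i+\frac12}+dV_{i+\frac12}=0$, i.e. $A_{i+\frac12}^n=0$ for every $i$; therefore $(A_{i+\frac12}^n)^{\pm}=0$, $\mathcal{F}_{i+\frac12}^n=0$, and (\ref{fullydiscrete}) yields $U_i^{n+1}=U_i^n=U_i^{eq}$. This holds irrespective of the time step, exactly as in part $(ii)$ of the previous proposition.

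The only genuinely delicate point is the bookkeeping in the second step: the factor $1/\text{d}(x_i,x_{i+1})$ hidden inside $A_{i+\frac12}^n$, together with the factor $1/\Delta x_i$ produced by the finite volume update, is what forces the quadratic scaling $\Delta x^2$ (and the constant $\tfrac12$) in (\ref{CFL}); on a nonuniform mesh one must be careful to estimate both mesh-size factors by $\min_j\Delta x_j$ so that they combine to give the stated restriction. Everything else is a routine rearrangement.
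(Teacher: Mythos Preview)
Your proof is correct and follows exactly the same approach as the paper: rewrite the first-order update as an affine combination of $U_{i-1}^n,U_i^n,U_{i+1}^n$ with the same formula, observe that the off-diagonal coefficients are nonnegative, and use the CFL condition to make the diagonal coefficient nonnegative. You actually supply more detail than the paper does---the paper simply asserts that the diagonal bound ``is necessarily the case from (\ref{CFL})'' without the mesh-size bookkeeping you carry out, and it omits the steady-state argument entirely---so your write-up is a strict elaboration of theirs.
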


\begin{proof}
Using the definition (\ref{fullydiscrete})-(\ref{bigdef})-(\ref{order1}) of the fully-discrete first-order scheme, we get for all $i=1,...,N_{x}$
\begin{equation*}
U_{i}^{n+1}=\left(1-\frac{\Delta t}{\Delta x_{i}}\left(\left(A_{i+\frac{1}{2}}^{n}\right)^{+}+\left(A_{i-\frac{1}{2}}^{n}\right)^{-}\right)\right)U_{i}^{n}+\frac{\Delta t}{\Delta x_{i}}\left(A_{i+\frac{1}{2}}^{n}\right)^{-}U_{i+1}^{n}+\frac{\Delta t}{\Delta x_{i}}\left(A_{i-\frac{1}{2}}^{n}\right)^{+}U_{i-1}^{n}.
\end{equation*} 
Thus we deduce that $U_{i}^{n+1} \geq 0$ as soon as $\ds{\frac{\Delta t}{\Delta x_{i}}\left(\left(A_{i+\frac{1}{2}}^{n}\right)^{+}+\left(A_{i-\frac{1}{2}}^{n}\right)^{-}\right) \leq 1}$, which is necessarily the case from (\ref{CFL}), using the definition of $A_{i+\frac{1}{2}}^{n}$.
\end{proof}

\begin{rem}
This result is not surprising since the stability condition for an explicit discretization of a parabolic equation requires the time step to be limited by a power two of the space step. 
\end{rem}


\section{Numerical simulations}


In this section, we present several numerical results performed by using our new fully-upwind flux. In all the numerical experiments performed, since our purpose is to focus on the spatial discretization, we choose a forward Euler method for the time discretization. As explained above, this choice of an explicit time discretization implies that the time step has to be limited by the square of the space step. Since we want to study the spatial accuracy of our scheme, we voluntarily choose a small time step in the first part of this section. Furthermore, since the CFL condition \eqref{CFL} becomes far less restrictive when the problem degenerates or when the solution tends to the equilibrium, we can use an adaptative time step.  Nevertheless, a fully implicit scheme would be also suitable for the long time behavior of the numerical solution  since in that case the numerical solution satisfies an entropy inequality, which is not the case with the explicit discretization we choose.
\\

We first study the spatial order of convergence of the scheme for linear convection in both non degenerate and degenerate cases. Then we will apply it to the physical models presented in the introduction: the porous media equation, the drift-diffusion system for semiconductors and the nonlinear Fokker-Planck equation for bosons and fermions. The results underline the efficiency of the scheme to preserve long-time behavior of the solutions. Finally we apply the scheme to a fully nonlinear problem: the Buckley-Leverett equation.\\
Below we make comparison between the finite volume schemes (\ref{fullydiscrete}) defined with the following numerical fluxes:
\begin{itemize}
\item \textbf{The first-order fully upwind flux}, given by
\begin{equation}
\mathcal{F}_{i+\frac{1}{2}}=\frac{{A}_{i+\frac{1}{2}}}{2}\left(f(U_{i})+f(U_{i+1})\right)-\frac{\left|{A}_{i+\frac{1}{2}}\right|\alpha_{i+\frac{1}{2}}}{2}\left(U_{i+1}-U_{i}\right),
\label{fluxFU1}
\tag{\textbf{FU1}}
\end{equation}  
with $ \ds{{A}_{i+\frac{1}{2}}}$, $\ds{\alpha_{i+\frac{1}{2}}}$ defined in (\ref{bigdef}).
\item \textbf{The second-order fully upwind flux}, given by
\begin{equation}
\mathcal{F}_{i+\frac{1}{2}}=\frac{{A}_{i+\frac{1}{2}}}{2}\left(f(U_{i+\frac{1}{2},-})+f(U_{i+\frac{1}{2},+})\right)-\frac{\left|{A}_{i+\frac{1}{2}}\right|\alpha_{i+\frac{1}{2}}}{2}\left(U_{i+\frac{1}{2},+}-U_{i+\frac{1}{2},-}\right).
\label{fluxFU2}
\tag{\textbf{FU2}}
\end{equation}  
\item \textbf{The classical upwind flux,} introduced and studied in \cite{Eymard2000}. It is valid for linear convection and for both linear and nonlinear diffusion. The diffusion term is discretized classically by using a two-points flux and the convection term is discretized with the upwind flux. This flux has then been used for the drift-diffusion system for semiconductors \cite{Chainais-Hillairet2003,Chainais-Hillairet2003a,Chainais-Hillairet2004}. It is defined for equation (\ref{eqgenediff}) by
\begin{equation}
\mathcal{F}_{i+\frac{1}{2}} =\left(-dV_{i+\frac{1}{2}} \right)^{+}U_{i}-\left(-dV_{i+\frac{1}{2}} \right)^{-}U_{i+1}-\frac{r\left(U_{i+1}\right)-r\left(U_{i}\right)}{\Delta x_{i+\frac{1}{2}}}.
\label{fluxCU}
\tag{\textbf{CU}}
\end{equation}
\item \textbf{The Scharfetter-Gummel flux and its extension for nonlinear diffusion.} This scheme is widely used in the semiconductors framework in the case of a linear diffusion. It has been proposed in \cite{Il'in1969,Scharfetter1969} for the numerical approximation of the 1D drift-diffusion model. This scheme preserves equilibrium and is second-order accurate \cite{Lazarov1996}. The definition of the Scharfetter-Gummel flux has been extended to the case of a nonlinear diffusion in \cite{Bessemoulin-Chatard}. For equation (\ref{eqgenediff}) this flux is written 
\begin{equation}
\mathcal{F}_{i+\frac{1}{2}} =\frac{dr_{i+\frac{1}{2}}}{\Delta x_{i+\frac{1}{2}}}\left[ B\left(\frac{\Delta x_{i+\frac{1}{2}} dV_{i+\frac{1}{2}}}{dr_{i+\frac{1}{2}}}\right)U_{i}-B\left(-\frac{\Delta x_{i+\frac{1}{2}} dV_{i+\frac{1}{2}}}{dr_{i+\frac{1}{2}}}\right)U_{i+1}\right],
\label{fluxSG1}
\tag{\textbf{SGext}}
\end{equation}
where 
\begin{equation*}
\left\{\begin{array}{lll} \ds{B(x)=\frac{x}{e^{x}-1}} \text{ for } x \neq 0, \quad B(0)=1, & &\\  \ds{ dr_{i+\frac{1}{2}}=dr\left(U_{i},U_{i+1}\right) ,\vphantom{ \frac{x-\frac{1}{2}}{e^{x}}}} \end{array}\right.
\end{equation*}
with for $a$, $b \in \mathbb{R}_{+}$,
\begin{equation*}
dr(a,b)=\left\{ \begin{array}{cll} \ds{\frac{h(b)-h(a)}{\log(b)-\log(a)}} & &\text{ if } ab>0 \text{ and } a \neq b,\\ \, \\ \ds{r'\left(\frac{a+b}{2}\right)} & &\text{ elsewhere. } \end{array} \right.
\end{equation*} 
\end{itemize}


\subsection{Order of convergence}

In this part, we test the spatial accuracy of the scheme for linear convection ($f(s)=s$). We first consider a test case in 1D on $(0,T) \times (-1,1)$ with $ \pa_{x}V=-1$. The time step is taken equal to $\Delta t=10^{-8}$ to study the order of convergence with respect to the spatial step size. The boundary conditions are periodic. Since we don't know an exact solution of the problem, we compute relative errors. More precisely, an estimation of the relative error in $L^{1}$ norm at time $T$ is given by
\begin{equation*}
e_{2\Delta x}=\Vert u_{\Delta x}(T)-u_{2\Delta x}(T)\Vert_{L^{1}(\Omega)},
\end{equation*}
where $u_{\Delta x}$ represents the approximation computed from a mesh of size $\Delta x$. The numerical scheme is said to be $k$-th order if $e_{2\Delta x} \leq C \Delta x^{k}$, for all $0<\Delta x \ll 1$. \\
\paragraph*{Example 1 (Non degenerate case).}  We first take $r(s)=s^{2}$, thus $r'(0)=0$ and $r'(s)>0$ for all $s>0$. The initial data is
\begin{equation*}
u_{0}(x)= 0.5+0.5\sin(\pi x), \quad x \in (-1,1)
\end{equation*} 
and the final time $T=0.1$. In Figure \ref{ex1_1},  we represent the evolution of the approximate solution computed on a fine mesh made of 3200 cells, with the scheme (\textbf{FU2}). Since the solution becomes strictly positive for all $t>0$, this problem is not degenerate.
\begin{figure}[!ht]
\centering
\includegraphics[width=2.6in]{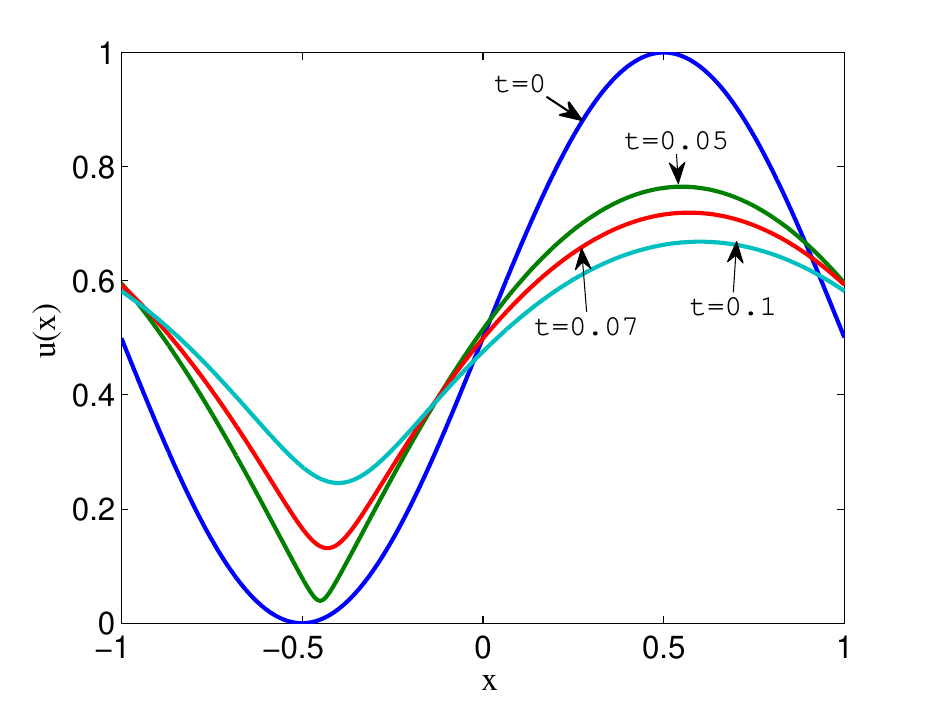}
\caption{Example 1 - Evolution of the approximate solution computed on a fine mesh.}
\label{ex1_1}
\end{figure}
In Table \ref{tablenonlin1} we compare the order of convergence in  $L^{1}$ norm of the Scharfetter-Gummel extended scheme (\ref{fluxSG1}) and of our first and second order fully upwind fluxes (\ref{fluxFU1})-(\ref{fluxFU2}). It appears that the Scharfetter-Gummel scheme is second order accurate, as expected since the diffusion is not degenerate. Moreover, we verify experimentally that our scheme (\ref{fluxFU2}) is second-order accurate and we notice that the $L^{1}$ error obtained with it is smaller than that obtained with the Scharfetter-Gummel extended scheme.

\begin{table}[!ht]
\centering
\begin{tabular}{|c|c|c|c|c|c|c|c|c|}
\hline $N_{x}$  & $L^{1}$ error  & Order & $L^{1}$ error & Order & $L^{1}$ error & Order \\ 
   & \textbf{SGext}  &  & \textbf{FU1} &  & \textbf{FU2} & \\ 
\hline 
       100     & $ 1.451.10^{-4} $  & 2 &  $ 2.667.10^{-3} $ & 0.87 & $ 8.237.10^{-5} $  & 1.87 \\ 
       200     & $ 3.619.10^{-5} $  & 2 &  $ 1.398.10^{-3} $ & 0.93 & $ 2.208.10^{-5} $  & 1.9  \\ 
       400     & $ 9.027.10^{-6} $  & 2 &  $ 7.156.10^{-4} $ & 0.97 & $ 5.778.10^{-6} $  & 1.93 \\
       800     & $ 2.251.10^{-6} $  & 2 &  $ 3.621.10^{-4} $ & 0.98 & $ 1.485.10^{-6} $  & 1.96 \\
       1600    & $ 5.614.10^{-7} $  & 2 &  $ 1.822.10^{-4} $ & 0.99 & $ 3.772.10^{-7} $  & 1.98 \\
\hline
\end{tabular} 
\caption{Example 1 - Experimental spatial order of convergence in $L^{1}$ norm.}
\label{tablenonlin1}
\end{table}

\paragraph*{Example 2 (Degenerate case).} We still consider the same test case, but now with
\begin{equation*} 
r(s)=\left\{\begin{array}{ll} (s-1)^{3} &\text{ if }  s \geq 1, \\ 0 & \text{ elsewhere,}  \end{array}\right. 
\end{equation*}
then $r'(s)=0$ for all $s \in (0,1)$. The initial data is
\begin{equation*}
u_{0}(x)= 1+0.5\sin(\pi x) \quad x \in (-1,1),
\end{equation*} 
and the final time is $T=0.01$. The diffusion vanishes in $\lbrace x \in (-1,1) \,: \, u(x) \leq 1 \rbrace$, which is not empty, then this test case is degenerate. In Figure \ref{ex2_1}, we represent the evolution of the deviation from the initial data of the approximate solution computed on a fine mesh made of 3200 cells with the scheme \eqref{fluxFU2}. We observe a loss of regularity during the evolution. 
\begin{figure}[!ht]
\centering
\includegraphics[width=2.6in]{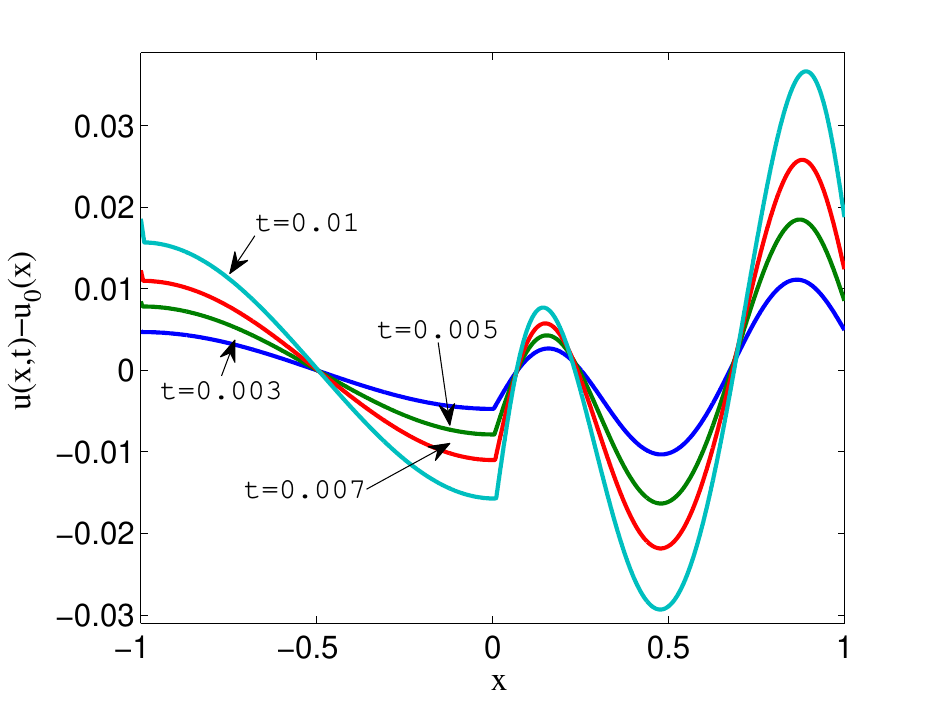}
\caption{Example 2 - Evolution of the deviation from the initial data $u(t)-u_{0}$.}
\label{ex2_1}
\end{figure}
In Table \ref{tabledeg1} we compare the order of convergence in  $L^{1}$ norm of the Scharfetter-Gummel extended scheme (\ref{fluxSG1}) and of our first and second order fully upwind fluxes (\ref{fluxFU1})-(\ref{fluxFU2}). In this case where $r'$ vanishes on a whole interval, it appears that the second-order scheme (\ref{fluxFU2}) is more accurate than the two others schemes. The Scharfetter-Gummel extended scheme is only one order accurate while second-order accuracy is almost preserved with our new scheme, in spite of the loss of regularity of the solution observed in Figure \ref{ex2_1}.

\begin{table}[!ht]
\centering
\begin{tabular}{|c|c|c|c|c|c|c|c|c|}
\hline $N_{x}$  & $L^{1}$ error  & Order & $L^{1}$ error & Order & $L^{1}$ error & Order \\ 
   & \textbf{SGext}  &  & \textbf{FU1} &  & \textbf{FU2} & \\ 
\hline 
       100   & $ 3.074.10^{-4} $  & 0.96 &  $ 2.697.10^{-4} $ & 0.55 & $ 1.053.10^{-4} $  & 1.83  \\ 
       200   & $ 1.554.10^{-4} $  & 0.98 &  $ 1.531.10^{-4} $ & 0.82 & $ 2.830.10^{-5} $  & 1.90  \\ 
       400   & $ 7.834.10^{-5} $  & 0.99 &  $ 8.096.10^{-5} $ & 0.92 & $ 8.040.10^{-6} $  & 1.82  \\
       800   & $ 3.928.10^{-5} $  & 1    &  $ 4.163.10^{-5} $ & 0.96 & $ 2.288.10^{-6} $  & 1.81  \\
       1600  & $ 1.966.10^{-5} $  & 1    &  $ 2.111.10^{-5} $ & 0.98 & $ 6.576.10^{-7} $  & 1.80  \\
\hline
\end{tabular} 
\caption{Example 2 - Experimental spatial order of convergence in $L^{1}$ norm.}
\label{tabledeg1}
\end{table}

\paragraph*{Example 3 (Degenerate case).} Finally we consider the equation \eqref{eqgenediff} on $(0,T)\times \Omega=(0,1/2)\times(0,1)$ with $r(s)=\max(u-1,0)$ and $\pa_{x}V=-1$. The initial data is $u_{0}(x)=0$ and we consider the following Dirichlet boundary conditions:
\begin{equation*}
\left\{\begin{array}{lcl} u(t,0) &=&e^{2t}  \\ u(t,1)&=&0  \end{array}, \quad t \in (0,T).\right.
\end{equation*}
The exact solution is then given by
\begin{equation*}
u(t,x)=\left\{\begin{array}{lcl} \exp(2t-x) & \text{ if } & x<2t,\\ 0 & \text{ if }& x>2t. \end{array}\right.
\end{equation*}
We compute the solution up to $t=0.3$ with $ \Delta t=10^{-4}$ and $N_{x}=40$ uniform cells. The results are shown in Figure \ref{ex3}. This example works well and it illustrates the advantage of using a high-order method even in the case of a discontinuous solution, since the shock is less diffused with our scheme \eqref{fluxFU2} than with the three others.
\begin{figure}[!ht]
\centering
\subfigure{\includegraphics[width=2.6in]{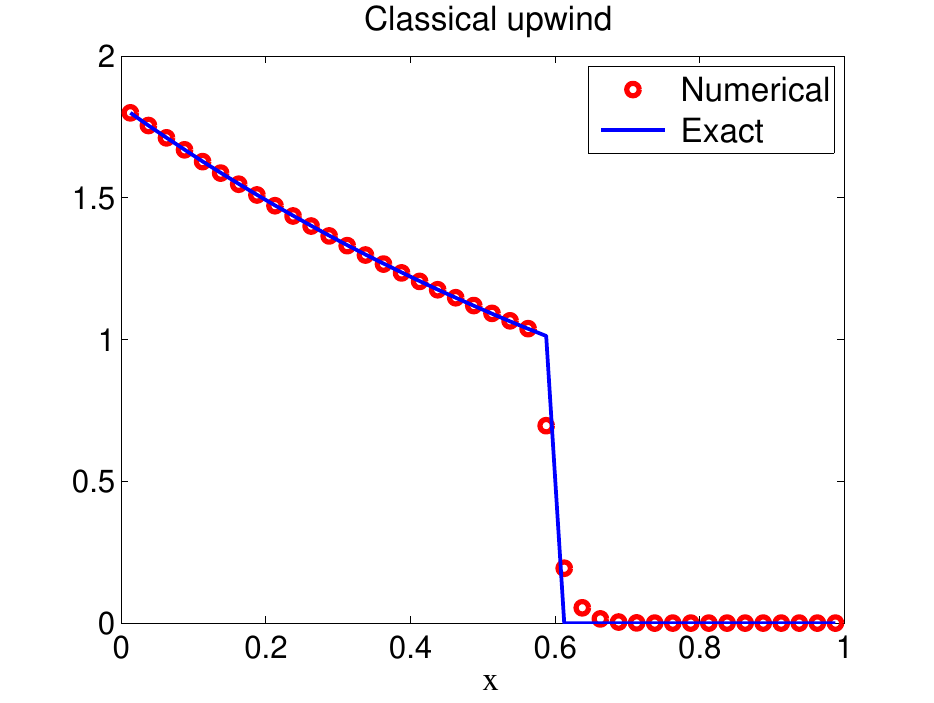}}
\subfigure{\includegraphics[width=2.6in]{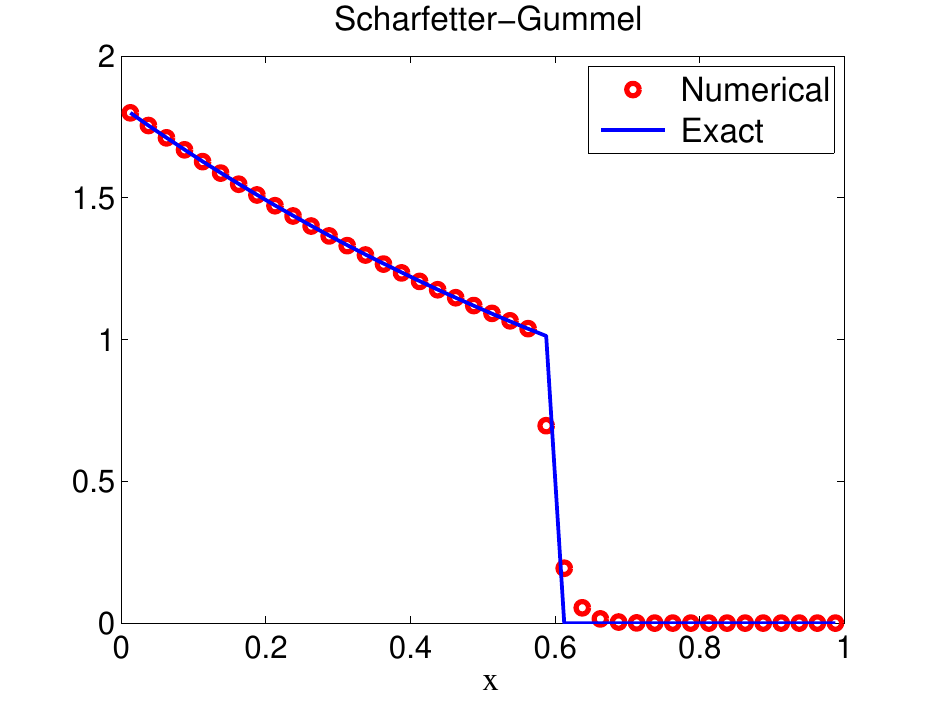}}
\subfigure{\includegraphics[width=2.6in]{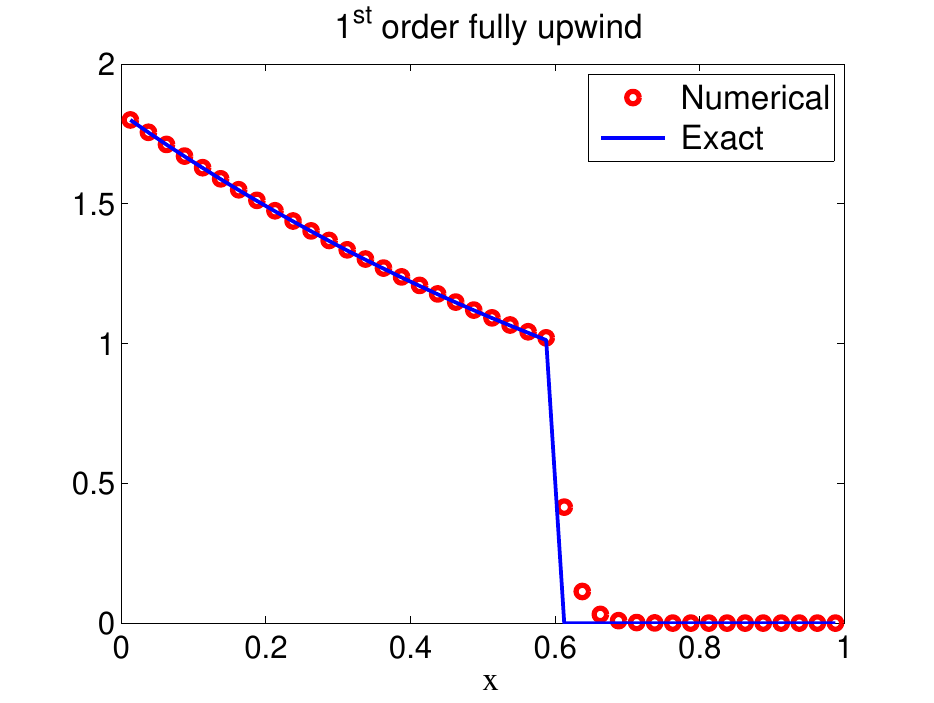}}
\subfigure{\includegraphics[width=2.6in]{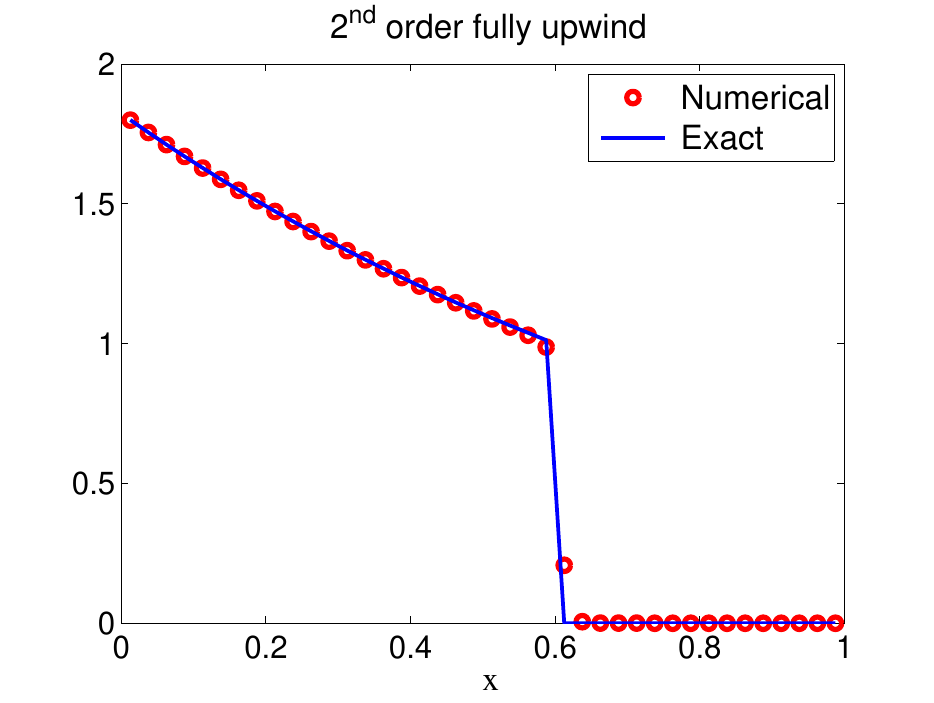}}
\caption{Example 3 - Numerical and exact solution computed at $t=0.3$ with different schemes.}
\label{ex3}
\end{figure}


\subsection{The drift-diffusion system for semiconductors}

We now consider the drift-diffusion system for semiconductors (\ref{DD}). In the two following examples, the Dirichlet boundary conditions satisfy (\ref{compatibility1})-(\ref{compatibility2}), so the thermal equilibrium is uniquely defined by (\ref{eqthermiqueDD}). We compute an approximation $(N^{eq}_{i},P^{eq}_{i},V^{eq}_{i})_{i=1,...,N_{x}}$ of this equilibrium with the finite volume scheme proposed by C. Chainais-Hillairet and F. Filbet in \cite{Chainais-Hillairet2007}.

\paragraph*{Example 4.} Firstly we consider a 1D test case on $\Omega=(0,1)$. We take $r(s)=s^{2}$. Initial data are
\begin{equation*}
N_{0}(x)=\left\{\begin{array}{ccc} 0 & \text{ for }& x \leq 0.5 \\ 1 & \text{ for } & x>0.5 \end{array}\right. , \quad P_{0}(x)=\left\{\begin{array}{ccc} 1 & \text{ for }& x \leq 0.5 \\ 0 & \text{ for } & x>0.5 \end{array}\right.,
\end{equation*}
and we consider the following Dirichlet boundary conditions
\begin{equation*}
\begin{array}{ccl} N(0,t)=0, \quad & P(0,t) = 1, \quad & V(0,t)=-1,\\ N(1,t) = 1, \quad & P(1,t)=0, \quad & V(1,t)=1. \end{array}
\end{equation*}
The doping profile is
\begin{equation*}
C(x)=\left\{\begin{array}{ccl} -1 & \text{ for } & x \leq 0.5, \\ +1  & \text{ for } & x > 0.5. \end{array}\right.
\end{equation*}
The time step is $\Delta t = 5.10^{-5}$ and the final time $T=10$. The domain $(0,1)$ is divided into $N_{x}= 64$ uniform cells.\\
In Figure \ref{ex3_1}, we compare the discrete relative energy $\mathcal{E}_\Delta(t^{n})$ and its dissipation $\mathcal{I}_\Delta(t^{n})$ obtained with the Scharfetter-Gummel extended scheme (\ref{fluxSG1}), the classical upwind scheme (\ref{fluxCU}) and our first and second order schemes (\ref{fluxFU1})-(\ref{fluxFU2}). The classical upwind flux (\ref{fluxCU}) does not preserve the thermal equilibrium, which explains the phenomenon of saturation observed with it. The Scharfetter-Gummel extended flux (\ref{fluxSG1}) preserves the equilibrium at the points where the densities $N$ and $P$ do not vanish, but due to the zero boundary conditions on the left for $N$ and on the right for $P$, there is also a phenomenon of saturation with it. Contrary to these two schemes, our new schemes (\ref{fluxFU1})-(\ref{fluxFU2}) which preserve the equilibrium everywhere, provide a satisfying long-time behavior. Moreover, we computed the relative energy and its dissipation with our schemes for different numbers $N_{x}$ of cells and notice that the decay rate does not depend on the spatial step size. We obtained satisfying results even for few number of cells.

\begin{figure}[!ht]
\centering
\subfigure{\includegraphics[width=2.6in]{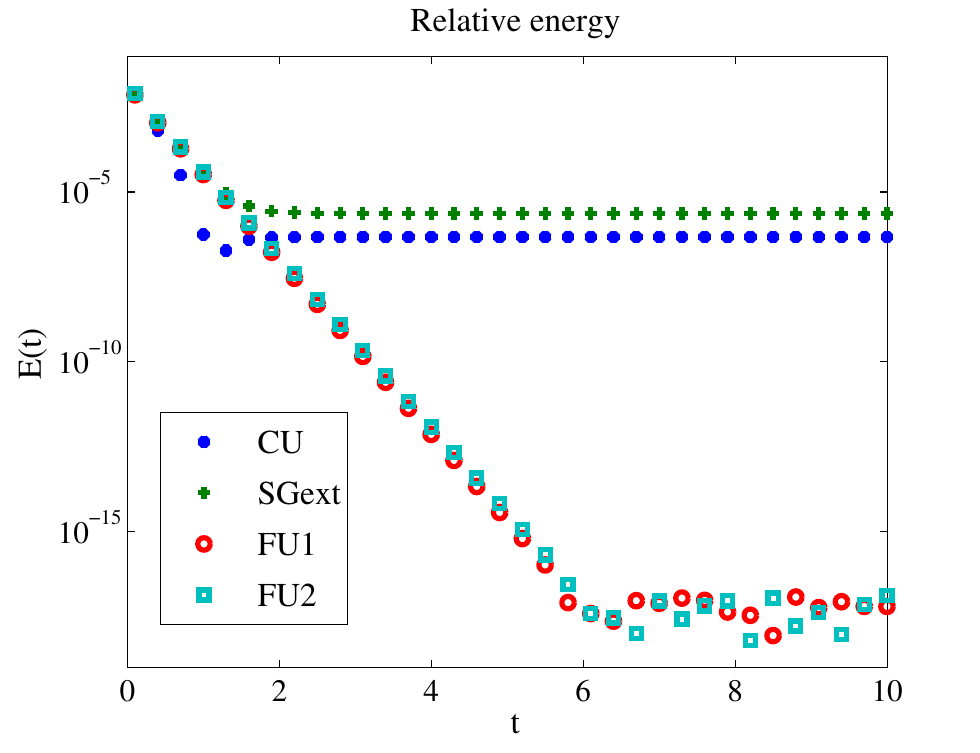}}
\subfigure{\includegraphics[width=2.6in]{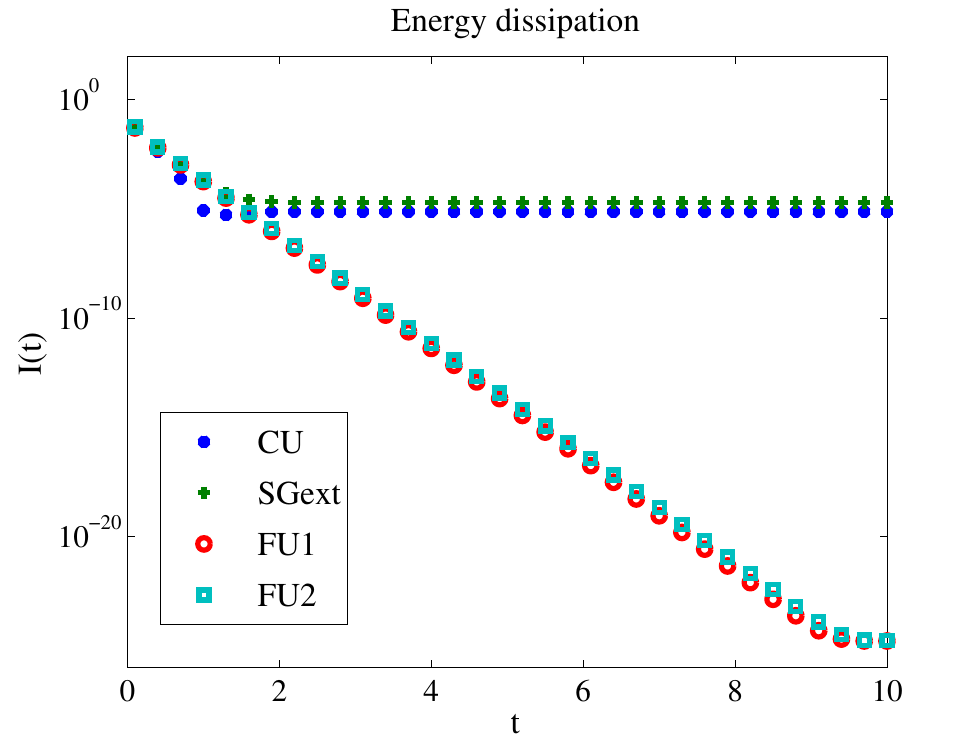}}
\caption{Example 4 - Evolution of the relative energy $\mathcal{E}_\Delta (t^{n})$ and its dissipation $\mathcal{I}_\Delta(t^{n})$ in log-scale for different schemes ($N_{x}=64$).}
\label{ex3_1}
\end{figure}

\paragraph*{Example 5.} Let us consider now a 2D test case picked on the paper of C. Chainais-Hillairet, J. G. Liu and Y. J. Peng \cite{Chainais-Hillairet2003}. As in the previous example, the Dirichlet boundary conditions vanish on some part of the boundary. The time step is $\Delta t= 10^{-4}$, the final time is $T=10$ and we compute an approximate solution on a $32 \times 32$ Cartesian grid.\\
In Figure \ref{ex4}, we compare the discrete relative energy $\mathcal{E}_\Delta(t^{n})$ and its dissipation $\mathcal{I}_\Delta(t^{n})$ obtained with the Scharfetter-Gummel extended scheme (\ref{fluxSG1}), the classical upwind scheme (\ref{fluxCU}) and the fully upwind schemes (\ref{fluxFU1})-(\ref{fluxFU2}). We make the same observations as in Example 4: there is a phenomenon of saturation with the Scharfetter-Gummel extended and the classical upwind schemes, and not with our new scheme. Moreover, the decay rate does not depend on the number of grid cells chosen. 

\begin{figure}[!ht]
\centering
\subfigure{\includegraphics[width=2.6in]{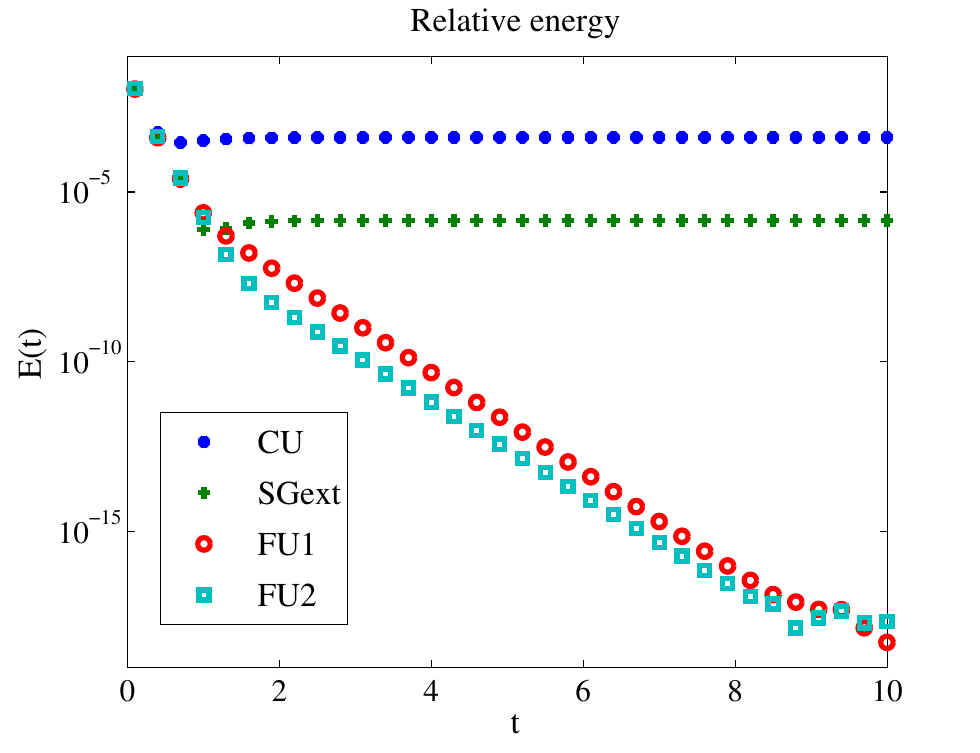}}
\subfigure{\includegraphics[width=2.6in]{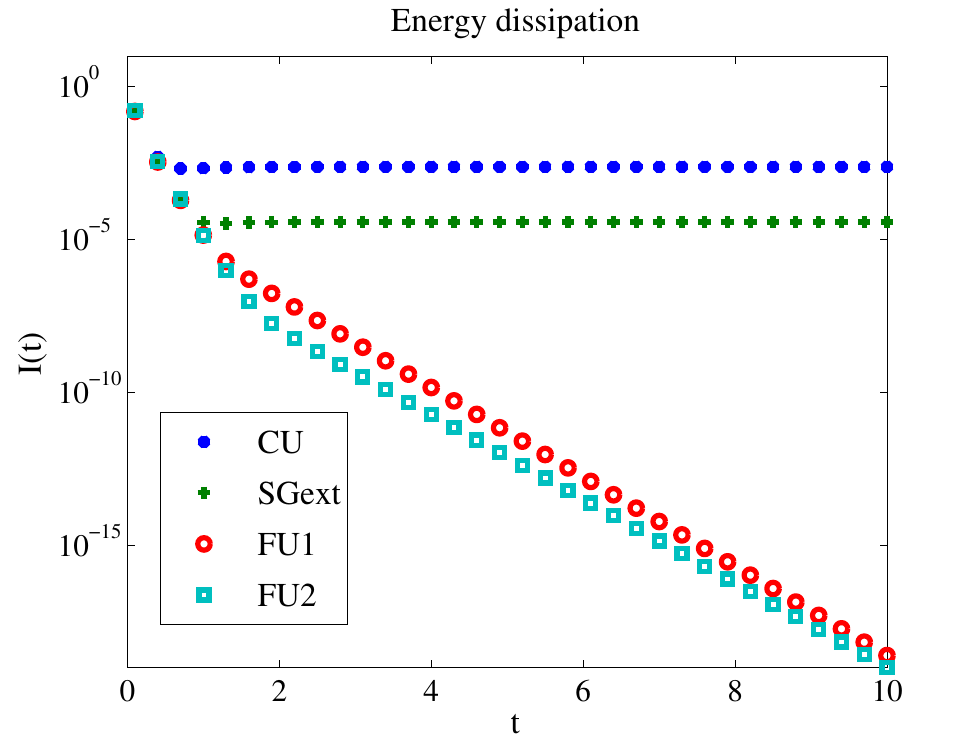}}
\caption{Example 5 - Evolution of the relative energy $\mathcal{E}_\Delta(t^{n})$ and its dissipation $\mathcal{I}_\Delta(t^{n})$ in log-scale for different schemes.}
\label{ex4}
\end{figure}


\subsection{The porous media equation}

In this part we approximate solutions to the porous media equation 
\begin{equation*}
\pa_{t}u=\nabla \cdot (xu+\nabla u^{m}).
\end{equation*}
We define an approximation $\left(U^{eq}_{i}\right)_{i=1,...,N_{x}}$ of the unique stationary solution $u^{eq}$ (\ref{barenblatt}) by
\begin{equation*}
U^{eq}_{i}=\left(\overline{C}-\frac{m -1}{2m}\left\vert x_{i}\right\vert^{2}\right)^{1/(m -1)}_{+}, \,\ i=1,...,N_{x},
\end{equation*}
where $\overline{C}$ is such that the discrete mass of $\left(U^{eq}_{i}\right)_{i =1,...,N_{x}}$ is equal to that of $\left(U^{0}_{i}\right)_{i=1,...,N_{x}}$, namely\\
$\ds{\sum_{i}\Delta x_{i}U^{eq}_{i}=\sum_{i}\Delta x_{i}U^{0}_{i}}$. We use a fixed point algorithm to compute this constant $ \overline{C}$.

\paragraph*{Example 6.} We consider the following one dimensional test case: $m=5$, with initial condition
\begin{equation*}
u_{0}(x)=\left\{\begin{array}{ll} 1 &\text{ if }\,  x \in (-3.7,-0.7) \cup (0.7,3.7), \\ 0 & \text{ otherwise. }  
\end{array}\right.
\end{equation*}
Then we compute the approximate solution on $(-5.5,5.5)$, which is divided into $N_{x}=160$ uniform cells. The time step is fixed to $\Delta t=10^{-4}$ and the final time is $T=10$.\\ 
In Figure \ref{ex5_1} we compare the discrete relative entropy $\mathcal{E}_\Delta(t^{n})$ and its dissipation $\mathcal{I}_\Delta(t^{n})$ obtained with the Scharfetter-Gummel extended scheme, the classical upwind scheme and the first and second order fully upwind schemes. We obtain almost the same behavior for the Scharfetter-Gummel scheme and the fully upwind schemes. We only notice that the dissipation $ \mathcal{I}_\Delta(t^{n})$ obtained with the Scharfetter-Gummel scheme saturates before those obtained with the fully upwind schemes. This phenomenon of saturation is still greater for the classical upwind scheme. Moreover, we observe an exponential decay of $\mathcal{E}_\Delta(t^{n})$ and $\mathcal{I}_\Delta(t^{n})$, at a rate -12. In their paper \cite{Carrillo2000}, J. A. Carrillo and G. Toscani obtain the following equation for the entropy dissipation:
\begin{equation*}
\frac{d}{dt}\mathcal{I}(t)=-2\,\mathcal{I}(t)-\mathcal{R}(t),
\end{equation*}
where $\mathcal{R}(t) \geq 0$ depends on the power $m$. Then they conclude with the exponential decay of the relative entropy $\mathcal{E}$ to zero at a rate -2. In our test where the initial condition is symmetric, we obtain a better rate, which seems to depend on $m$, taken equal to 5 here, thus it underlines the contribution of the term $ \mathcal{R}$ in this case. \\
However, if we now consider a nonsymmetric initial data $u_{0}(x)=\mathbf{1}_{[2,3]}(x)$ and compute the relative entropy $\mathcal{E}_\Delta(t^{n})$ obtained with our scheme \eqref{fluxFU2} for different values of $m$, we observe in Figure \ref{ex5_2} an exponential decay with rate -2, independently of the value of $m$. Thus in this case the estimate of decay of the relative entropy seems sharp.

\begin{figure}[!ht]
\centering
\subfigure{\includegraphics[width=2.6in]{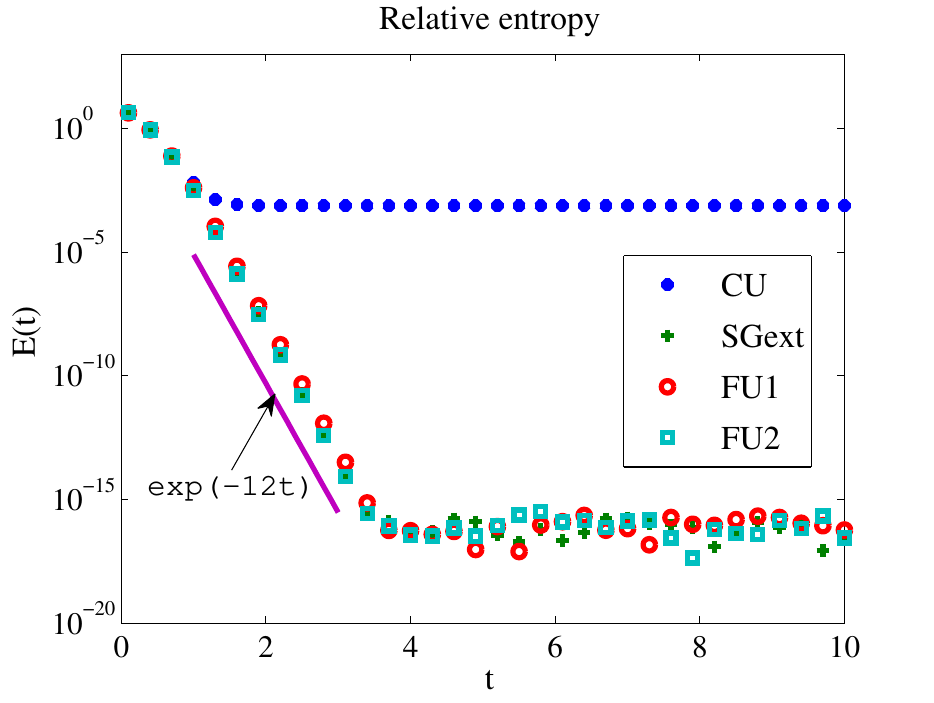}}
\subfigure{\includegraphics[width=2.6in]{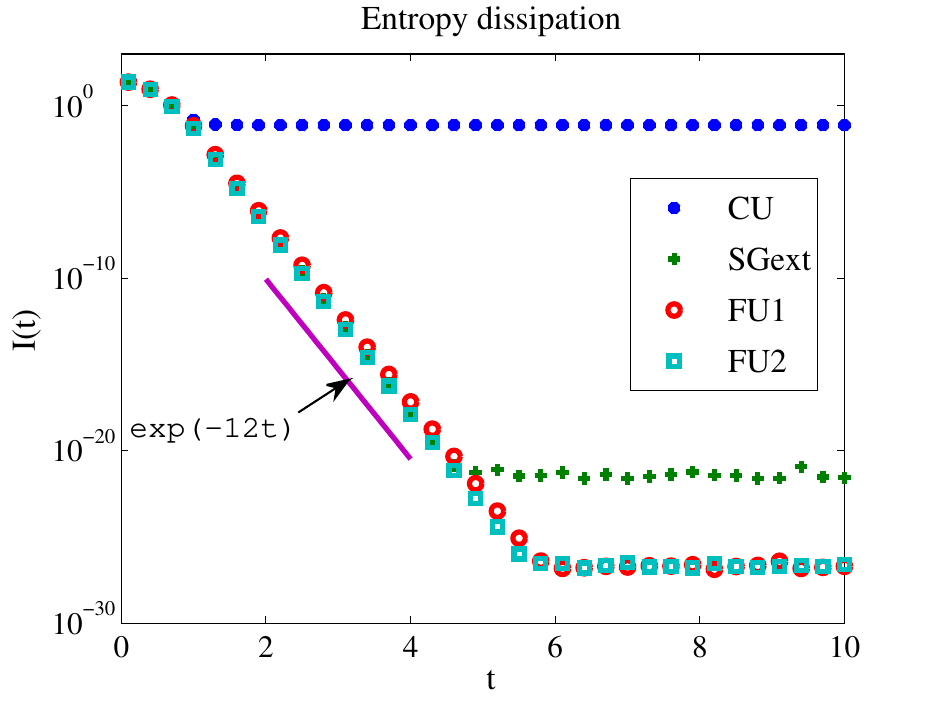}}
\caption{Example 6 - Evolution of the relative entropy $\mathcal{E}_\Delta(t^{n})$ and its dissipation $\mathcal{I}_\Delta(t^{n})$ in log-scale for different schemes.}
\label{ex5_1}
\end{figure}

\begin{figure}[!ht]
\centering
\includegraphics[width=2.6in]{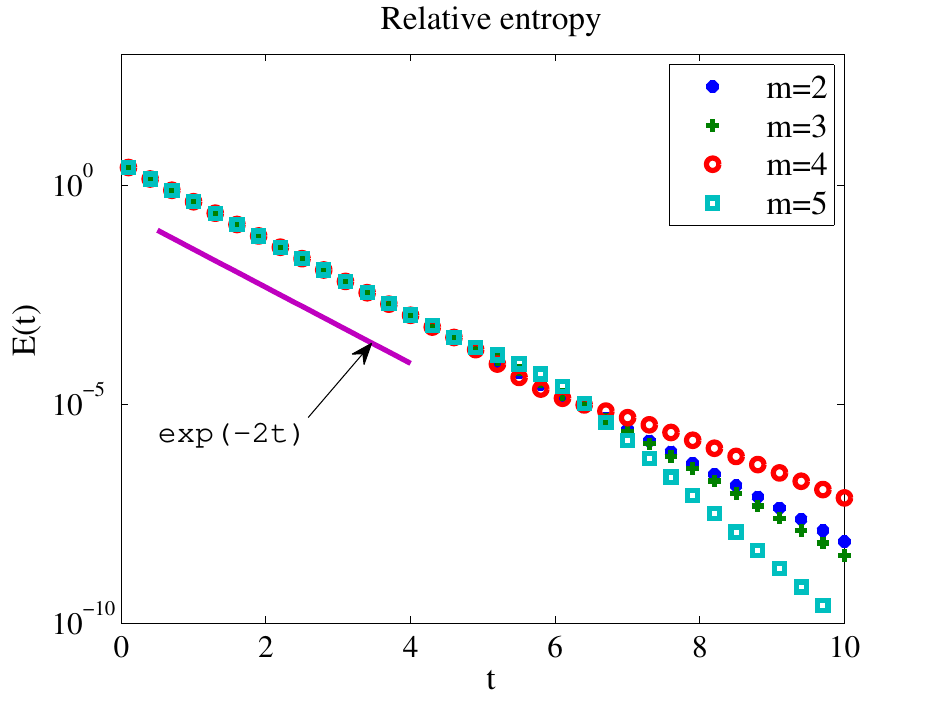}
\caption{Evolution of the relative entropy $\mathcal{E}_\Delta(t^{n})$ in log-scale for different values of $m$ in the case of a nonsymmetric initial data.}
\label{ex5_2}
\end{figure}

\paragraph*{Example 7.} We still consider the porous media equation, but now in two space dimension on $\Omega = (-10,10) \times (-10,10)$. We take $m=4$ and the initial condition is
\begin{equation*}
u_{0}(x,y)=\left\{\begin{array}{ll} 
\exp\left(-\frac{1}{6-(x-2)^{2}-(y+2)^{2}}\right) &\text{ if }\,  (x-2)^{2}+(y+2)^{2}<6, 
\\ 
\,
\\
\exp\left(-\frac{1}{6-(x+2)^{2}-(y-2)^{2}}\right) &\text{ if }\, (x+2)^{2}+(y-2)^{2}<6, 
\\
\,
\\
0 & \text{ otherwise. } 
\end{array}\right.
\end{equation*}
We compute the approximate solution on a $200 \times 200$ Cartesian grid, with $\Delta t=10^{-4}$ and $T=10$. \\
In Figure \ref{ex6_1} we compare the discrete relative entropy $\mathcal{E}_\Delta(t^{n})$ and its dissipation $\mathcal{I}_\Delta(t^{n})$ obtained with the Scharfetter-Gummel scheme, the classical upwind scheme and the fully upwind schemes, and obtain an exponential decay at a rate -4 with our new scheme \eqref{fluxFU2}.\\
Figure \ref{ex6_2} presents the evolution of the density of gas $u$ computed with our second-order scheme at four different times $t=0$, $t=0.5$, $t=1$ and $t=10$ and the approximation of the stationary solution $u^{eq}$ corresponding to this initial data.\\

\begin{figure}[!ht]
\centering
\subfigure{\includegraphics[width=2.6in]{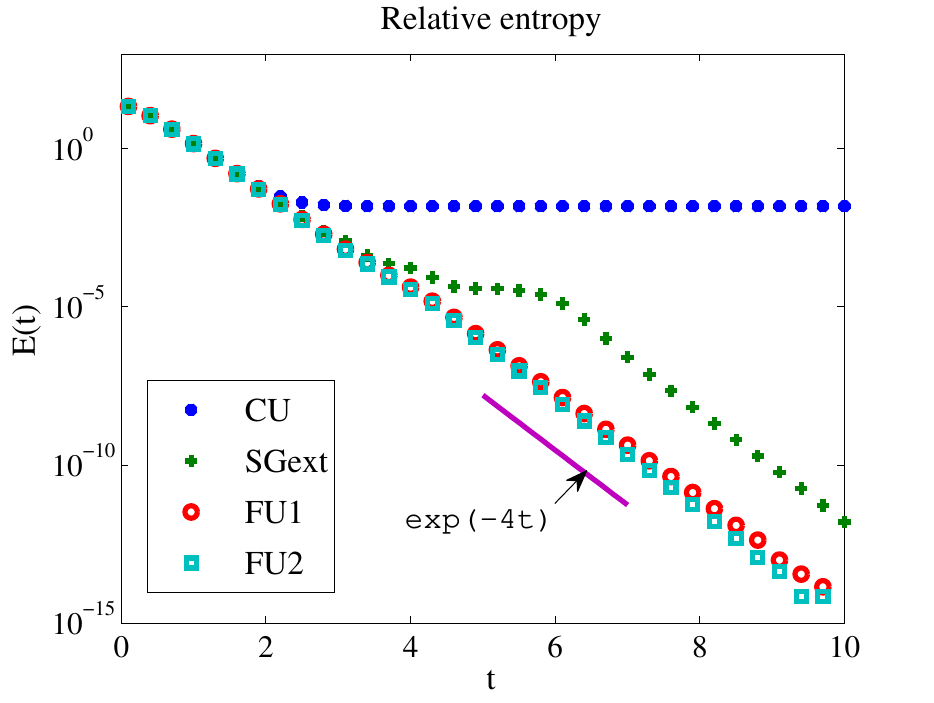}}
\subfigure{\includegraphics[width=2.6in]{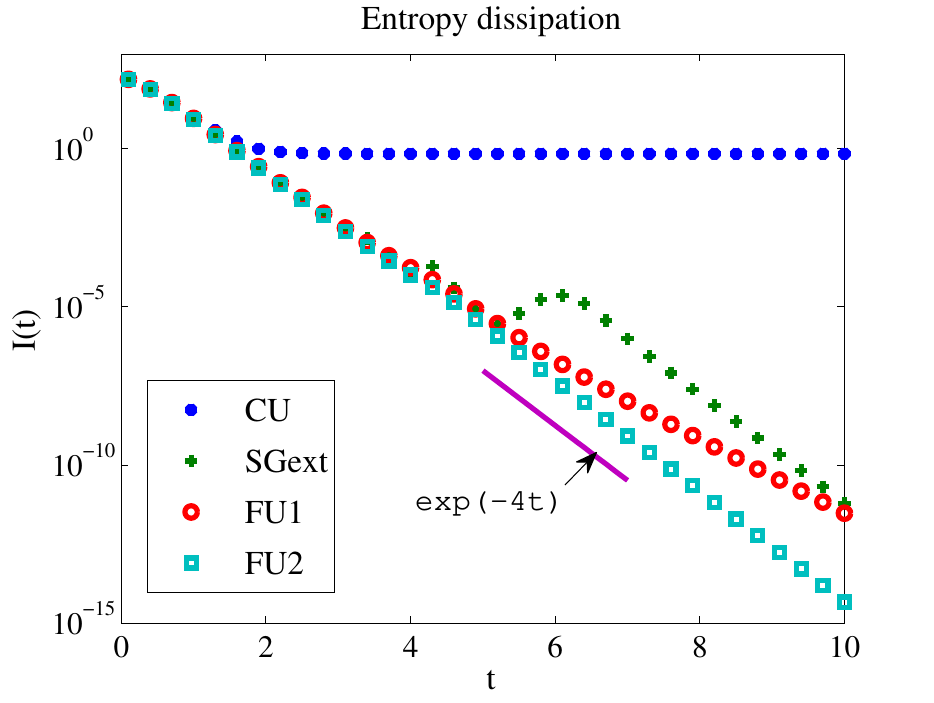}}
\caption{Example 7 - Evolution of the relative entropy $\mathcal{E}_\Delta(t^{n})$ and its dissipation $\mathcal{I}_\Delta(t^{n})$ in log-scale for different schemes.}
\label{ex6_1}
\end{figure}

\begin{figure}[!ht]
\centering
\subfigure[$t=0$]{\includegraphics[width=2.6in]{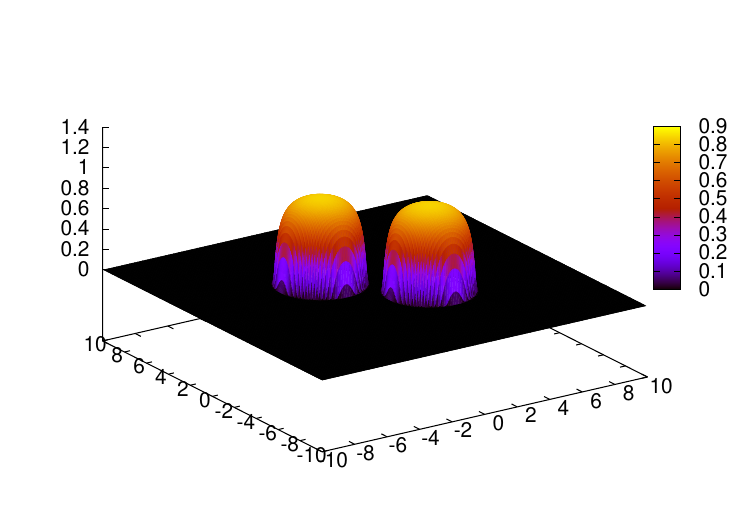}}
\subfigure[$t=0.5$]{\includegraphics[width=2.6in]{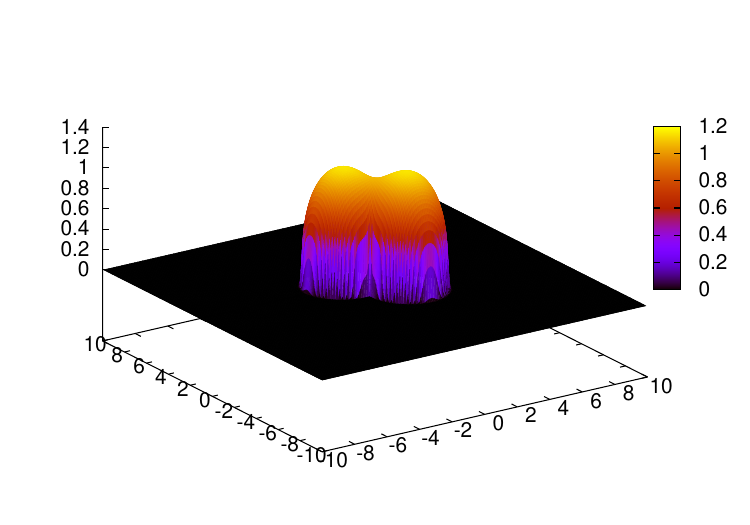}}
\subfigure[$t=1$]{\includegraphics[width=2.6in]{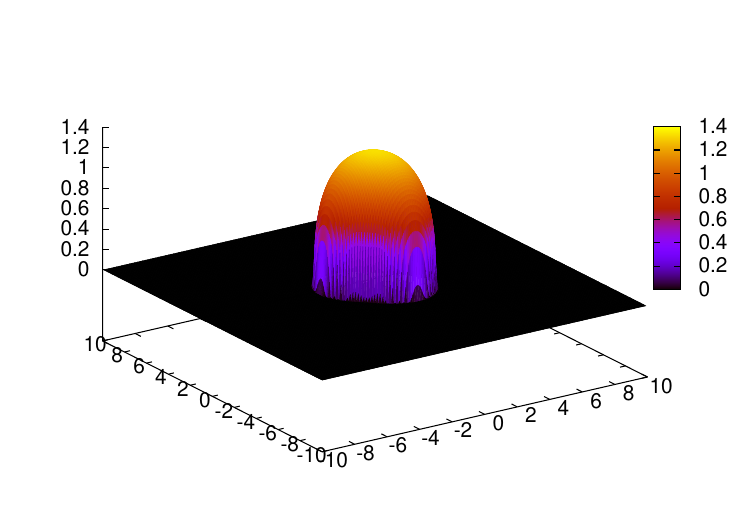}}
\subfigure[$t=10$]{\includegraphics[width=2.6in]{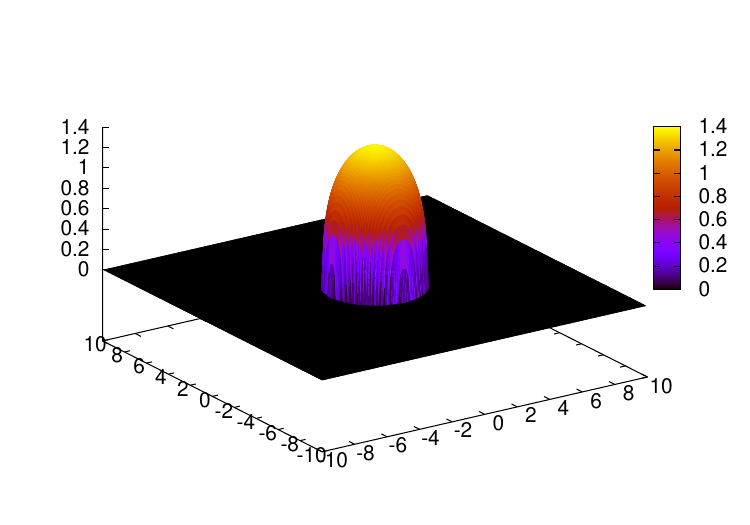}}
\subfigure[Stationary solution]{\includegraphics[scale=0.45]{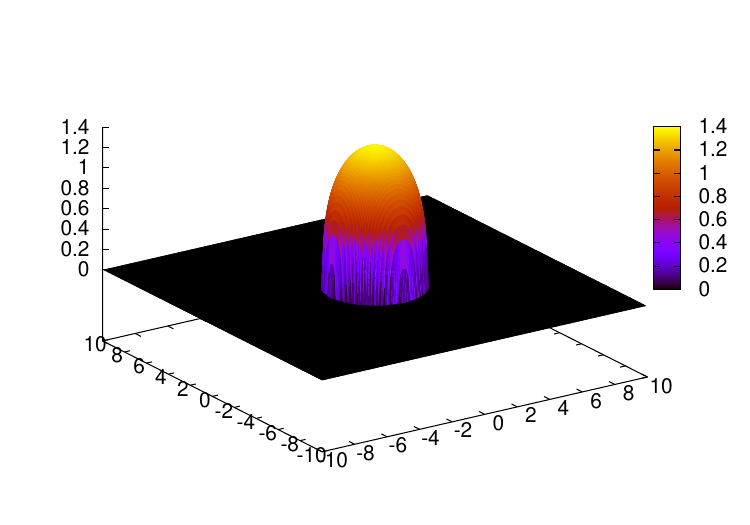}}
\caption{Example 7 - Evolution of the density of gas $u$ and corresponding stationary solution $u^{eq}$.}
\label{ex6_2}
\end{figure}


\subsection{Nonlinear Fokker-Planck equations for fermions and bosons}

\paragraph*{Example 8.}  We first consider the nonlinear Fokker-Planck equation (\ref{bosonfermion}) for fermions ($k=-1$). As in the porous media equation case, we define an approximation $\left(U^{eq}_{i}\right)_{i=1,...,N_{x}}$ of the unique stationary solution $u^{eq}$ (\ref{eqbosonfermion}) by
\begin{equation*}
U^{eq}_{i}=\frac{1}{\overline{\beta}e^{\frac{|x_{i}|^{2}}{2}}+1}, \,\ i=1,...,N_{x},
\end{equation*}
where $\overline{\beta} \geq 0$ is such that the discrete mass of $\left(U^{eq}_{i}\right)_{i =1,...,N_{x}}$ is equal to that of $\left(U^{0}_{i}\right)_{i=1,...,N_{x}}$. We use a fixed point algorithm to compute this constant $ \overline{\beta}$.\\
We consider a 3D test case. The initial condition is chosen as the sum of four Gaussian distributions:
\begin{equation*}
u_{0}(x)=\frac{1}{2\sqrt{2\pi}}\left(\exp \left(-\frac{|x-x_{1}|^{2}}{2}\right)+\exp \left(-\frac{|x-x_{2}|^{2}}{2}\right)+\exp \left(-\frac{|x-x_{3}|^{2}}{2}\right)+\exp \left(-\frac{|x-x_{4}|^{2}}{2}\right)\right),
\end{equation*}
where $x_{1}=(2,2,2)$, $x_{2}=(-2,-2,-2)$, $x_{3}=(2,-2,2)$ and $x_{4}=(-2,2,-2)$.\\
We consider a $40 \times 40 \times 40$ Cartesian grid of $\Omega=(-8,8)^{3}$, $\Delta t=10^{-4}$ and $T=10$.\\
Evolution of the discrete relative entropy $ \mathcal{E}_\Delta(t^{n})$, its dissipation $ \mathcal{I}_\Delta(t^{n})$ and $\Vert U^{n}-U^{eq}\Vert_{L^{1}}$ obtained with the scheme (\ref{fluxFU2}) is presented in Figure \ref{ex7_1}. We observe exponential decay rate of these quantities, which is in agreement with the result proved by J. A. Carrillo, Ph. Laurençot and J. Rosado in \cite{Carrillo2009}. \\
In Figure \ref{ex7_2} we report the evolution of the level set of the distribution function $u(t,x,y,z)=0.1$ at different times and the level set of the corresponding equilibrium solution $u^{eq}(x,y,z)=0.1$.

\begin{figure}[!ht]
\centering
\includegraphics[width=2.6in]{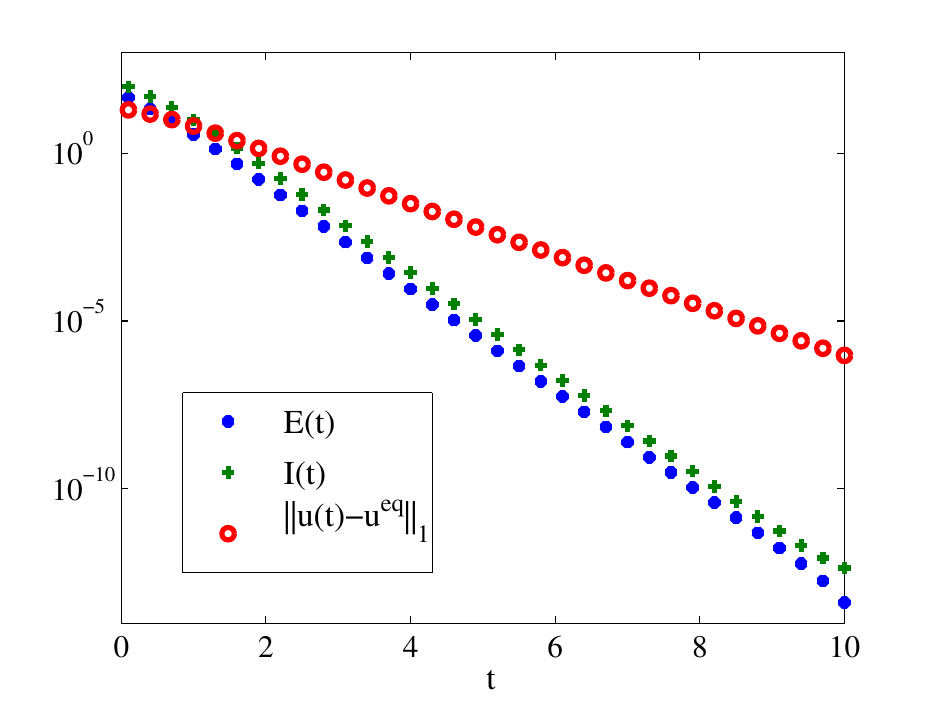}
\caption{Example 8 - Evolution of the relative entropy $\mathcal{E}_\Delta(t^{n})$, the dissipation $\mathcal{I}_\Delta(t^{n})$ and the $L^{1}$ norm $\Vert U^{n}-U^{eq}\Vert_{1}$.}
\label{ex7_1}
\end{figure}

\begin{figure}[!ht]
\centering
\subfigure[$t=0$]{\includegraphics[width=2.6in]{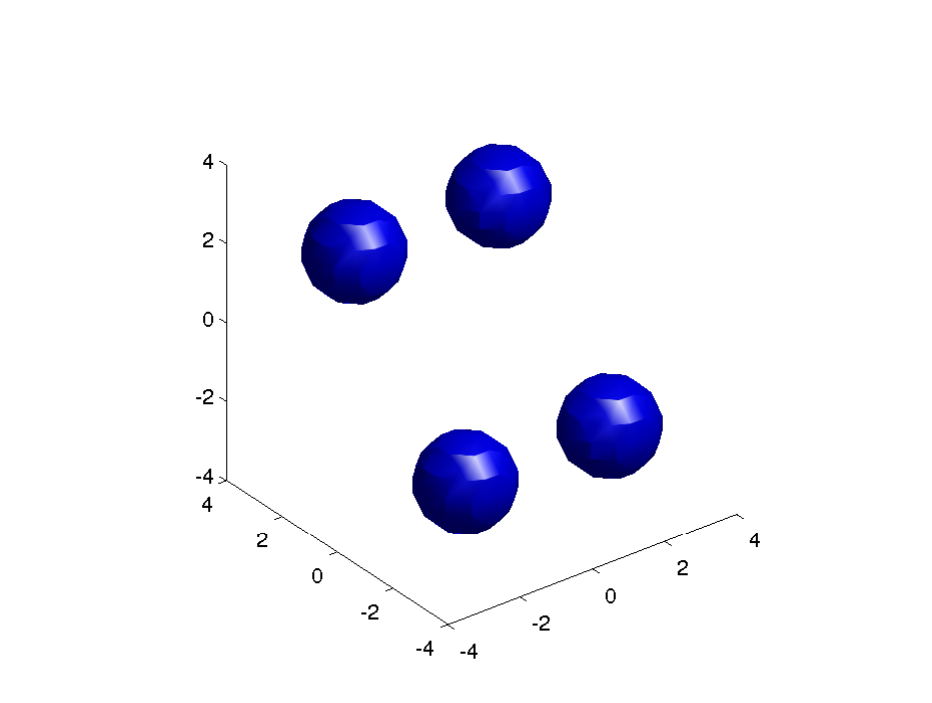}}
\subfigure[$t=0.2$]{\includegraphics[width=2.6in]{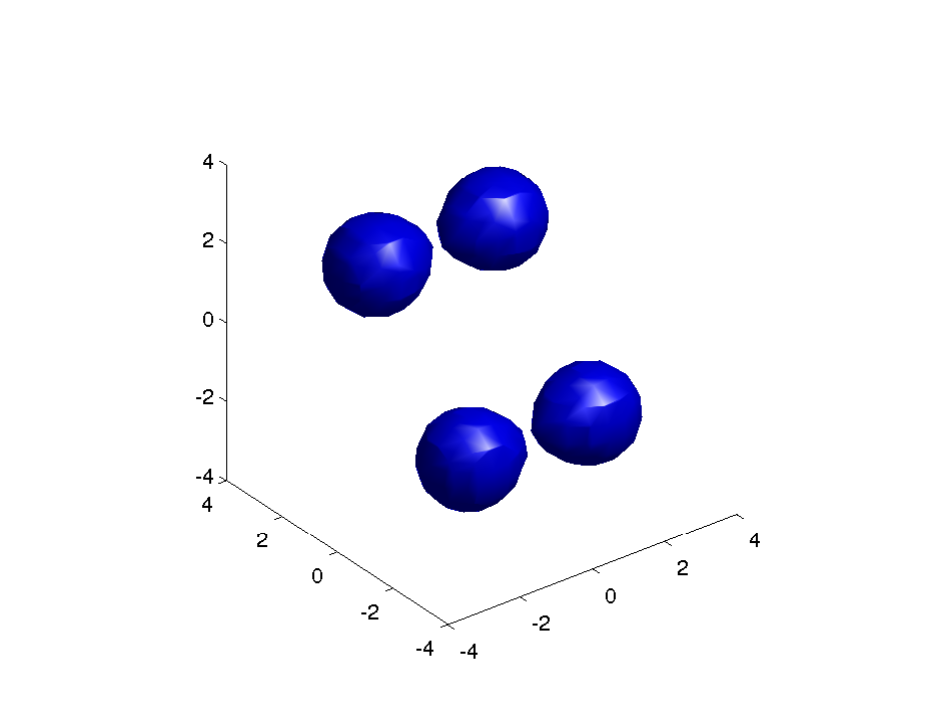}}
\subfigure[$t=0.4$]{\includegraphics[width=2.6in]{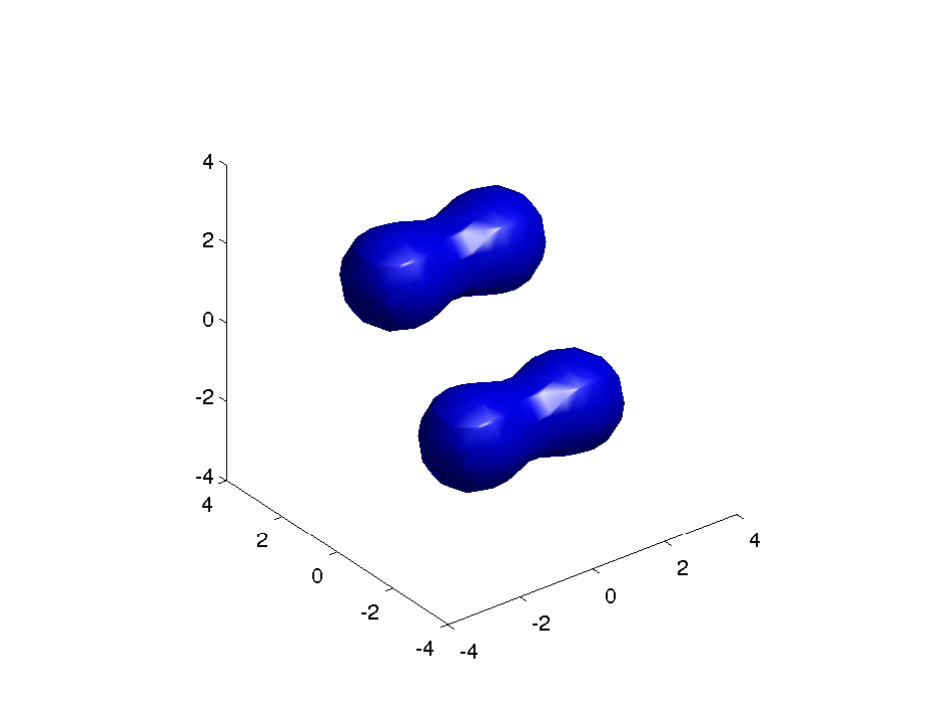}}
\subfigure[$t=1$]{\includegraphics[width=2.6in]{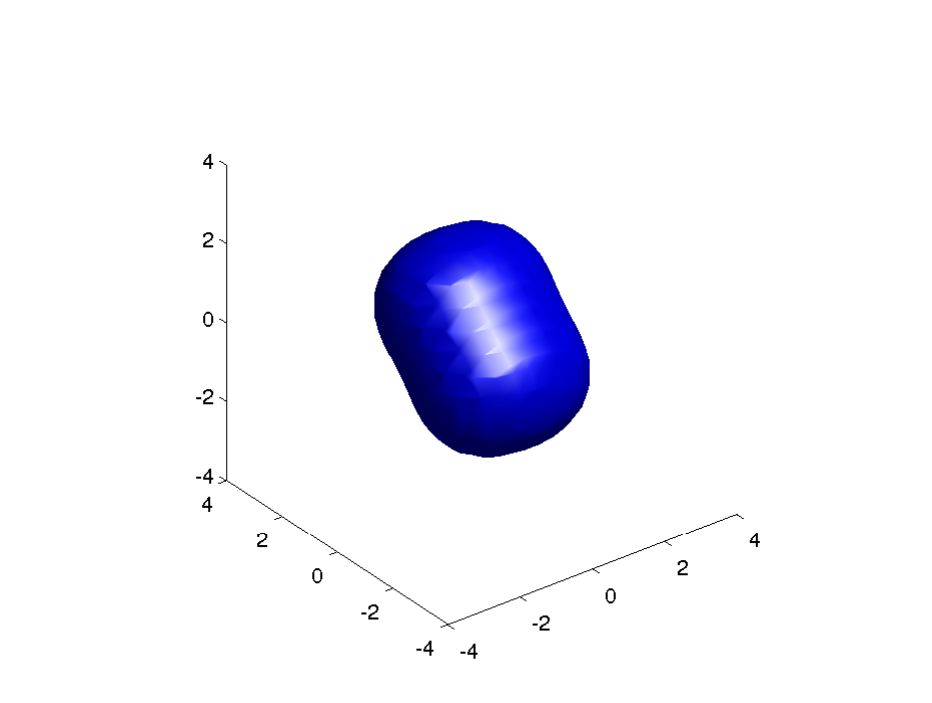}}
\subfigure[$t=10$]{\includegraphics[width=2.6in]{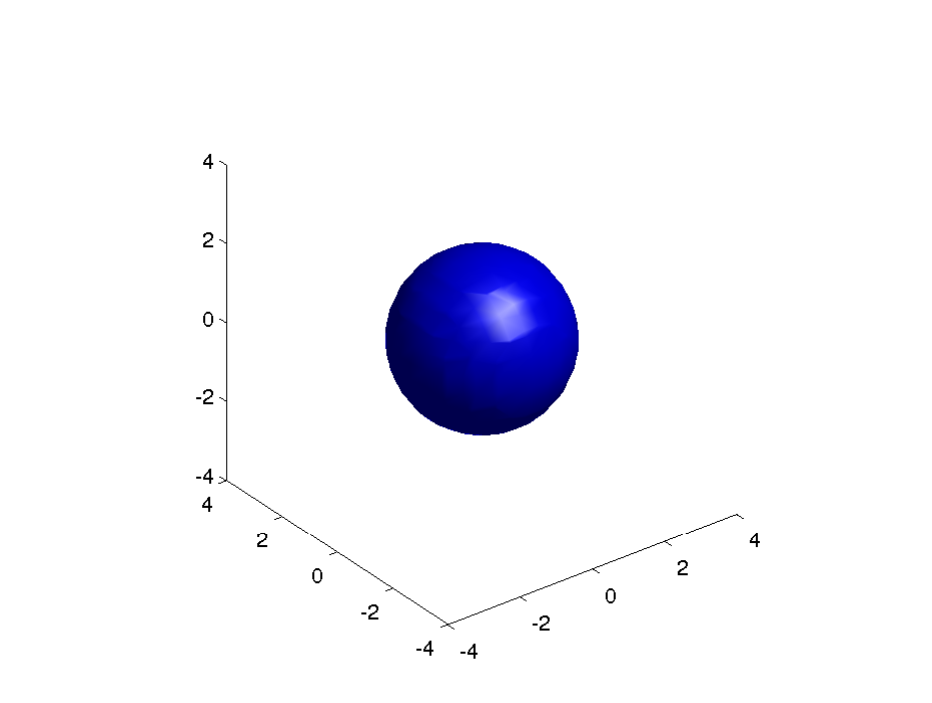}}
\subfigure[Stationary solution]{\includegraphics[width=2.6in]{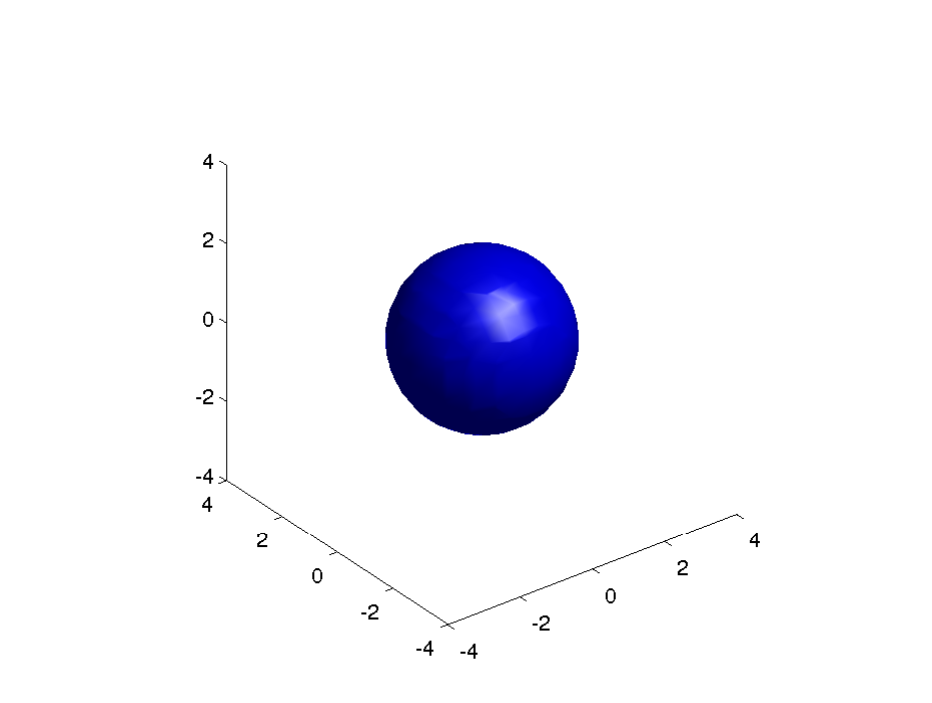}}
\caption{Example 8 - Evolution of the level set $u(t,x,y,z)=0.1$ and level set of the corresponding stationary solution $u^{eq}(x,y,z)=0.1$.}
\label{ex7_2}
\end{figure}

\paragraph*{Example 9.} We now consider the more general Fokker-Planck equation \eqref{eqbosons.gene} with $N=3$ in 1D:
\begin{equation*}
\pa_{t}u=\pa_{x}(xu(1+u^{3})+\pa_{x}u).
\end{equation*}
The initial condition is given by the sum of two Gaussian distributions:
\begin{equation*}
u_{0}(x)=\frac{M}{2\sqrt{2\pi}}\left(\exp\left(-\frac{|x-2|^{2}}{2}\right)+\exp\left(-\frac{|x+2|^{2}}{2}\right)\right),
\end{equation*}
where $M \geq 0$ is the mass of $u_{0}$. We compute an approximate solution with the scheme \eqref{fluxFU2} for two different values of $M$. The computational domain $(-10,10)$ is divided into $N_{x}=500$ uniform cells. \\
According to the paper of N. Ben Abdallah, I. Gamba and G. Toscani \cite{BenAbdallah2011}, there is a phenomenon of critical mass in this case. In Figure \ref{ex9_bosons1}, we represent the evolution of the density $u$ until time $T=10$ for an initial sub-critical mass $M=1$. We observe the convergence of the solution to the unique minimizer $u^{eq}$ of the entropy functional, given by
\begin{equation*}
u^{eq}(x)=\left(\beta \, e^{3x^{2}/2}-1\right)^{-\frac{1}{3}},
\end{equation*}
according to \cite{BenAbdallah2011}, where $\beta$ is such that $\int u^{eq}(x)\,dx=M$. Moreover, we observe in this case an exponential decay rate of the dissipation and the $L^{1}$ distance between the solution and the equilibrium.\\
In Figure \ref{ex9_bosons3}, we represent the evolution of the density $u$ for an initial super-critical mass $M=10$ until time $T=0.9$. We observe in this case the convergence of the solution to an equilibrium which has a singular part localized in the origin, which is in agreement with the result proved in \cite{BenAbdallah2011}.

\begin{figure}[!ht]
\centering
\subfigure{\includegraphics[width=2.6in]{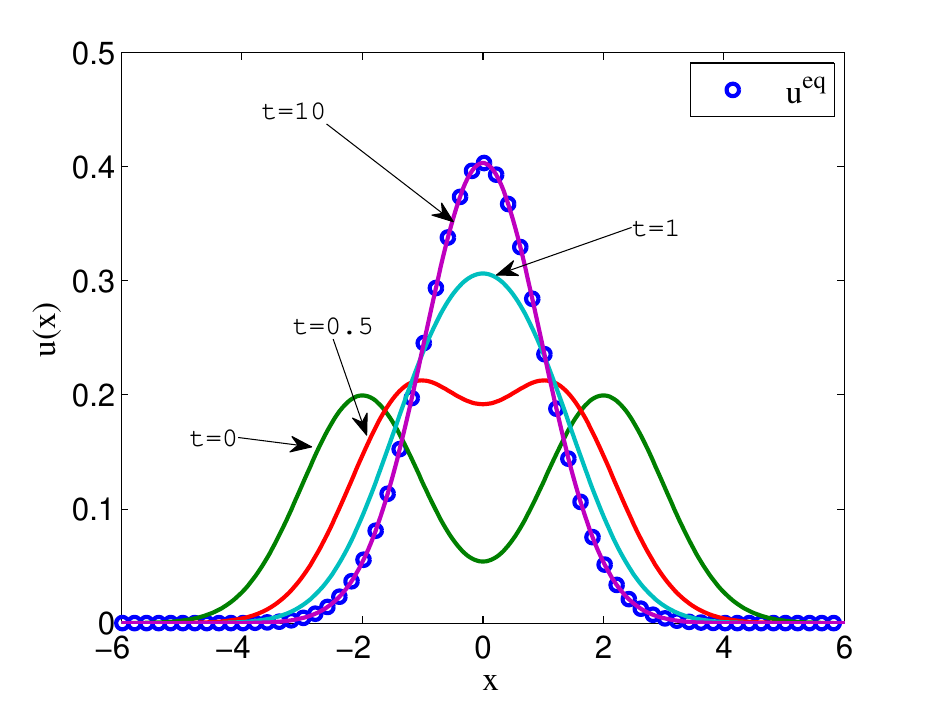}}
\subfigure{\includegraphics[width=2.6in]{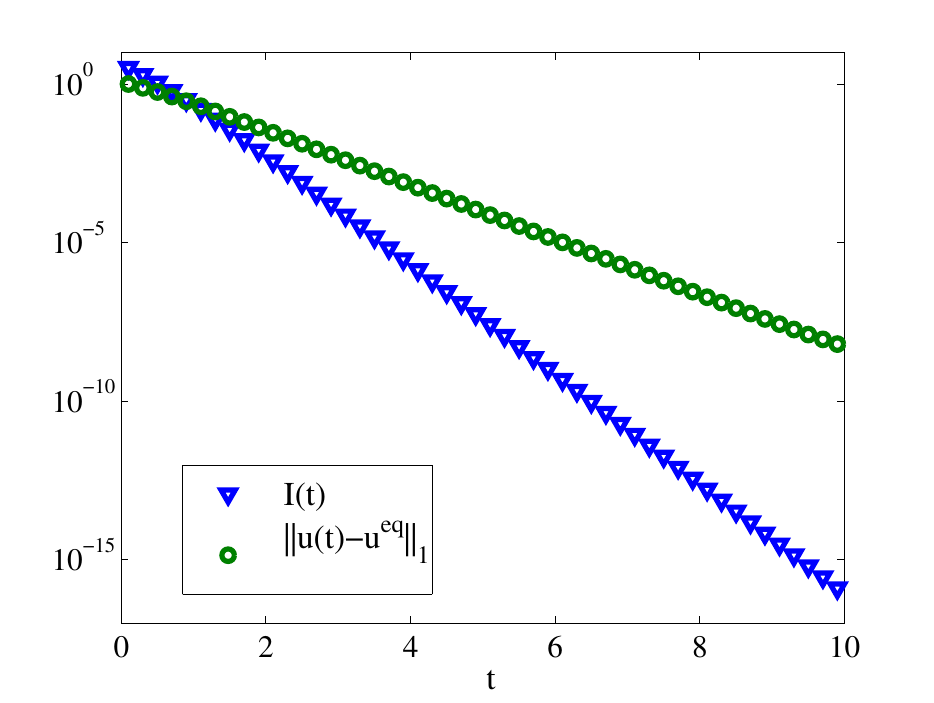}}
\caption{Example 9 - Evolution of the density $u$ of sub-critical mass $M=1$ (left) and of the corresponding dissipation $\mathcal{I}_\Delta(t^{n})$ and $L^{1}$ norm $\Vert U^{n}-U^{eq}\Vert_{1}$ (right).}
\label{ex9_bosons1}
\end{figure}

\begin{figure}[!ht]
\centering
\subfigure[$t=0$]{\includegraphics[width=1.6in]{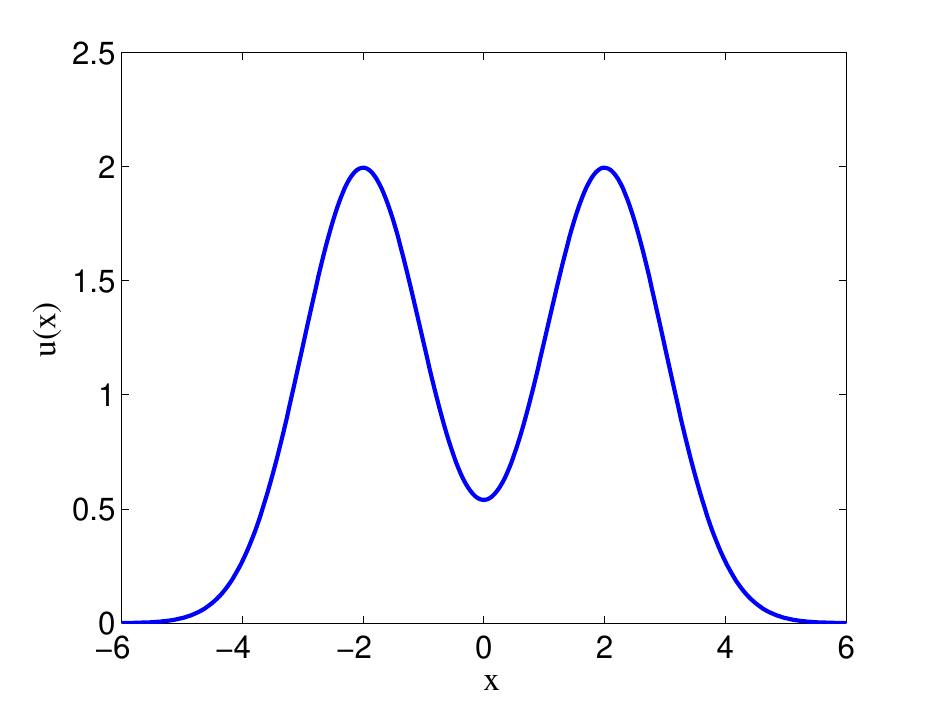}}
\subfigure[$t=0.05$]{\includegraphics[width=1.6in]{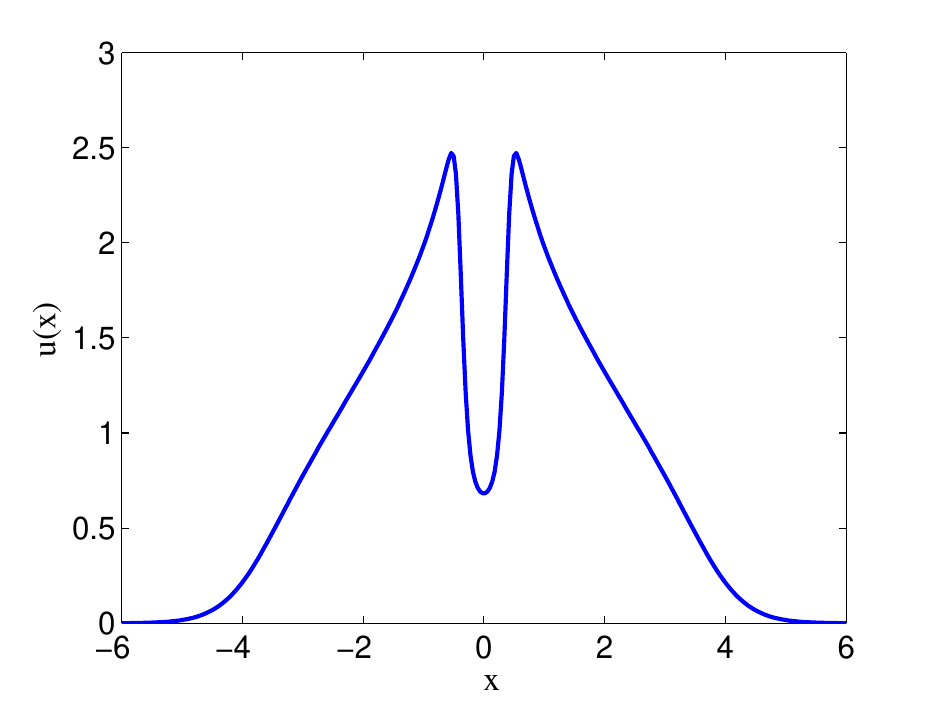}}
\subfigure[$t=0.2$]{\includegraphics[width=1.6in]{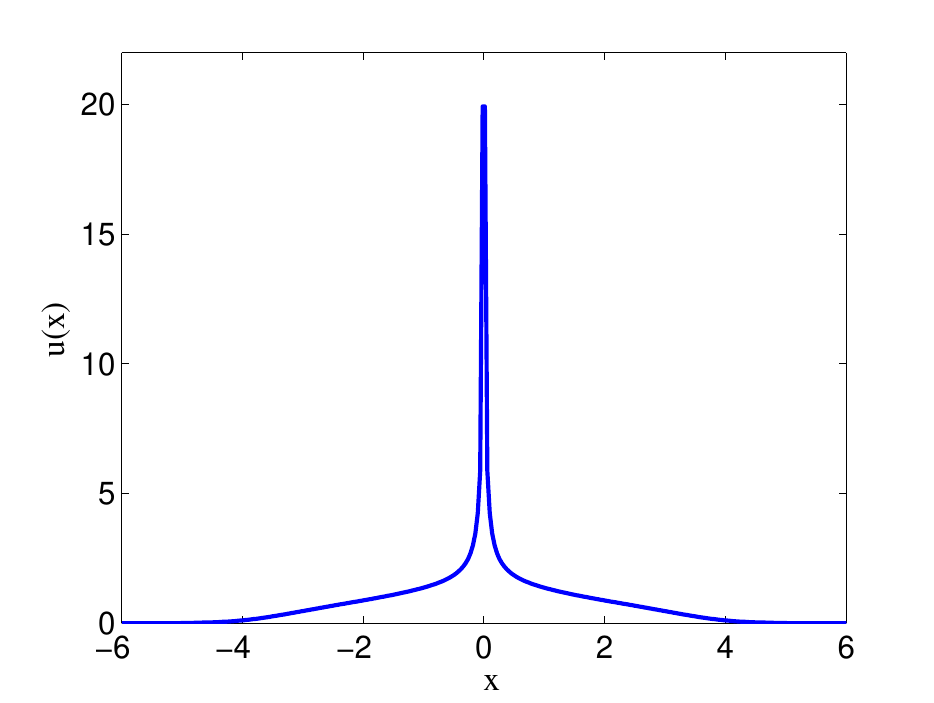}}
\subfigure[$t=0.9$]{\includegraphics[width=1.6in]{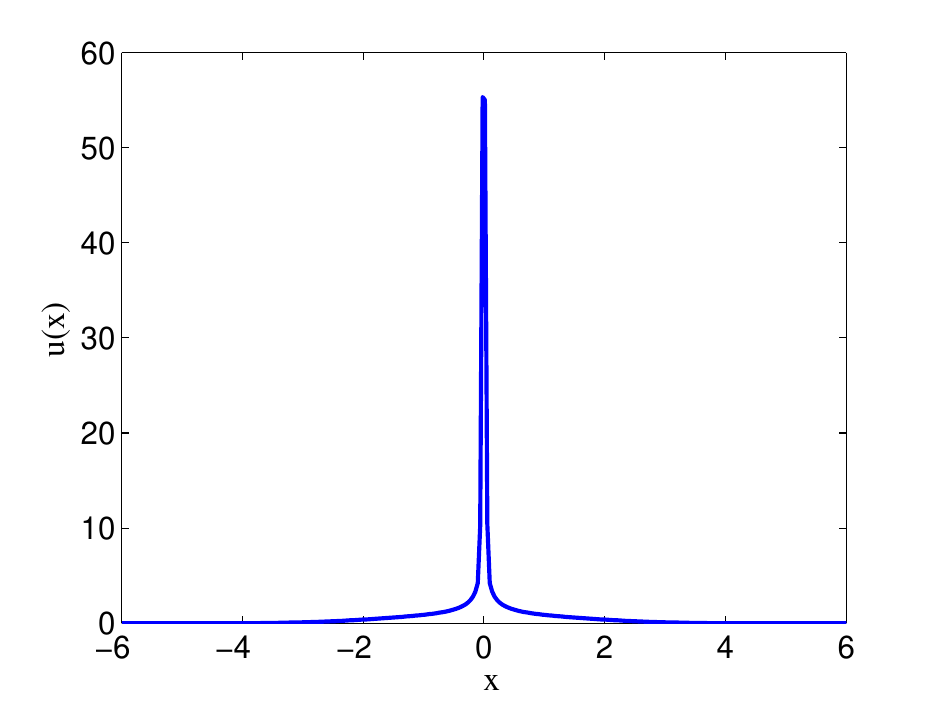}}
\caption{Example 9 - Evolution of the density $u$ of super-critical mass $M=10$.}
\label{ex9_bosons3}
\end{figure}

\subsection{The Buckley-Leverett equation}

Finally we consider the Buckley-Leverett equation, with both nonlinear convection and diffusion:
\begin{equation}
\pa_{t}u=\pa_{x}\left(-f(u)+\pa_{x}r(u)\right).
\label{BLeq}
\end{equation}
The Buckley-Leverett equation is a simple model for displacement of oil by water in oil reservoirs. The function $u(t,x) \in [0,1]$ represents the fraction of fluid corresponding to oil. We consider a fractional flow function $f$ with a s-shaped form
\begin{equation*}
f(u)=\frac{u^{2}}{u^{2}+(1-u)^{2}}.
\end{equation*}
This choice corresponds to a model which does not include the gravitational effects. The function $r$ is such that $r'(u)=\varepsilon \, \nu(u)$, where the capillary diffusion coefficient is given by
\begin{equation*}
\nu(u)=4u(1-u).
\end{equation*}
The scaling parameter $ \varepsilon>0$ in front of the capillary diffusion is usually small. \\
In this particular case, the Buckley-Leverett equation \eqref{BLeq} possesses a functional which dissipates a quantity. Indeed, rewriting the flux under the form \eqref{fluxgene} by taking $V=-x$ and 
\begin{equation*}
h(u)=4 \left(\log(u)-3u+2u^{2}-\frac{2}{3}u^{3}\right),
\end{equation*}
multiplying the equation \eqref{BLeq} by $\left(-x+h(u)\right)$ and integrating over $\Omega$, we get
\begin{equation*}
\frac{d}{dt}\int_{\Omega}\left(-x+h(u)\right)\,u\,dx=-\int_{\Omega} f(u)\left|\pa_{x}\left(-x+h(u)\right)\right|^{2}\,dx \leq 0,
\end{equation*}
since $f(u) \geq 0$ for all $u \in [0,1]$.

\paragraph*{Example 10.} We consider the following test case \cite{Kurganov2000,Liu2011}: the domain $\Omega$ is $(0,1)$, the initial condition 
\begin{equation*}
u_{0}(x)=\left\{\begin{array}{ccl} 1-3x & \text{ if } & 0 \leq x \leq \frac{1}{3}, 
\\
\,
\\ 0 & \text{ if } & \frac{1}{3}<x \leq 1, \end{array}\right.
\end{equation*}
and the boundary condition $u(0,t)=1$. The domain is divided into $N_{x}=100$ cells and the time step is $\Delta t=10^{-4}$. The numerical solution computed at different times for different values of $ \varepsilon$ is shown in Figure \ref{ex8_1}. The results compare well with those in \cite{Kurganov2000,Liu2011}. Moreover, our scheme remains valid for all values of $\varepsilon$, even $ \varepsilon=0$. In this case the fully upwind flux degenerates into the well-known local Lax-Friedrichs flux.

\begin{figure}[!ht]
\centering
\subfigure[$\varepsilon=10^{-1}$]{\includegraphics[width=2.6in]{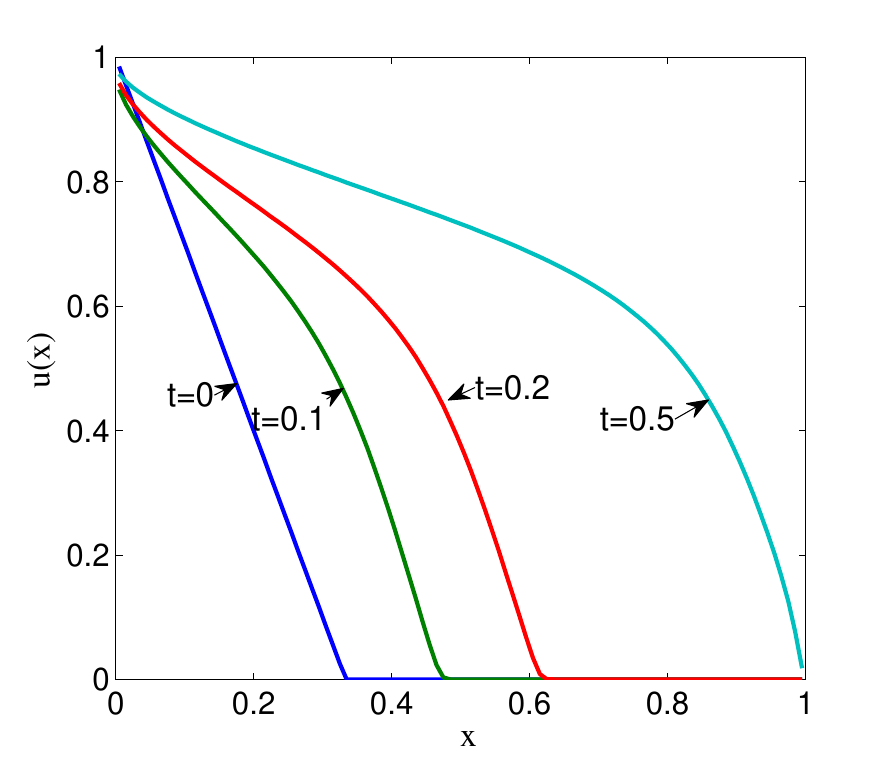}}
\subfigure[$\varepsilon=10^{-2}$]{\includegraphics[width=2.6in]{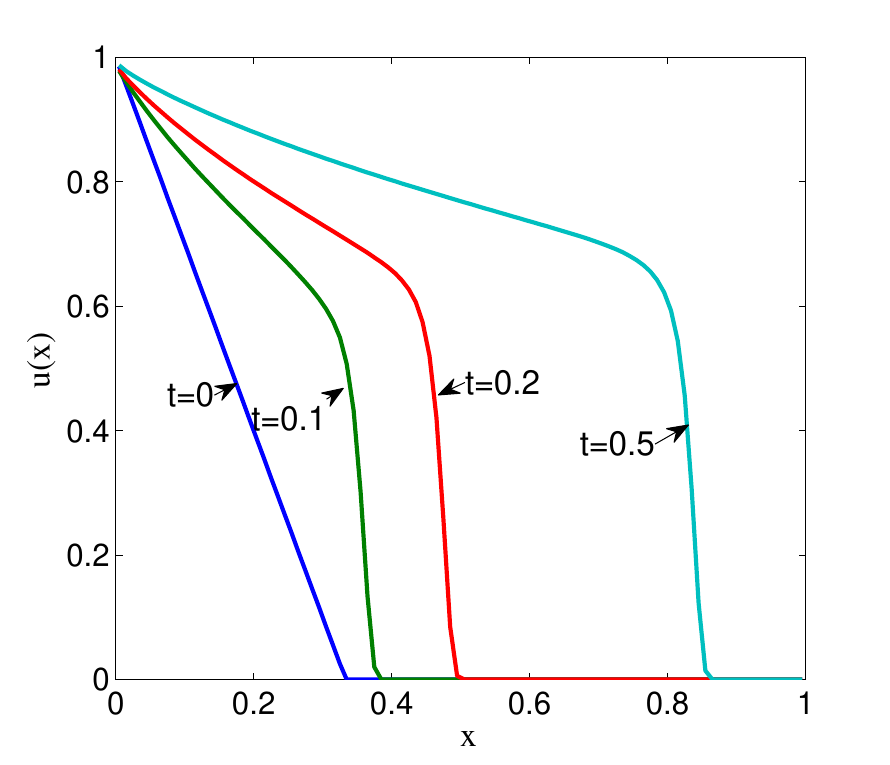}}
\subfigure[$\varepsilon=10^{-3}$]{\includegraphics[width=2.6in]{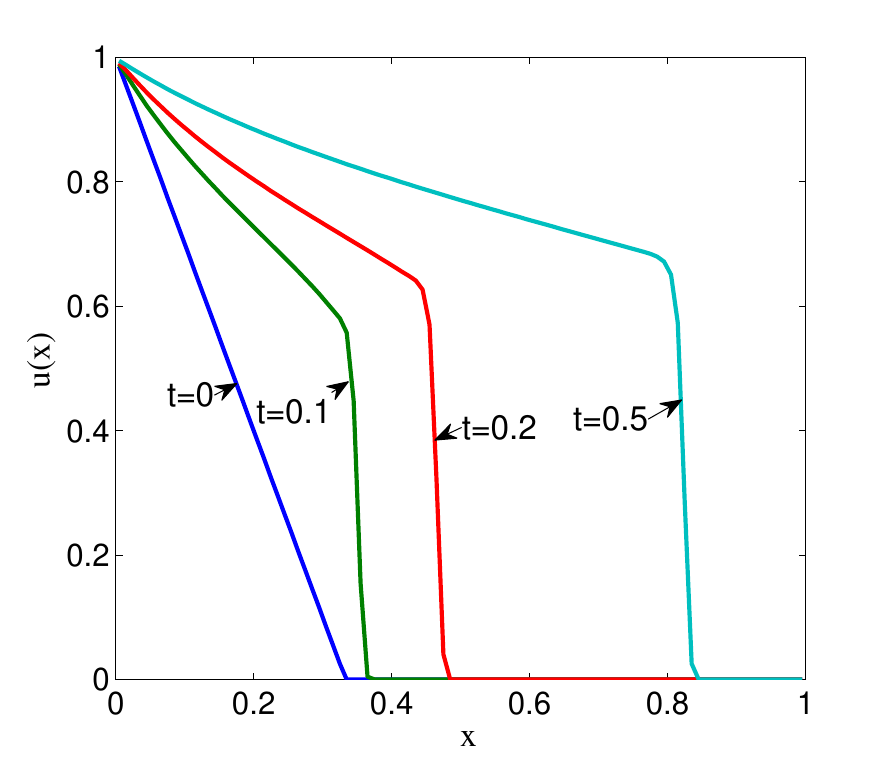}}
\subfigure[$\varepsilon=0$]{\includegraphics[width=2.6in]{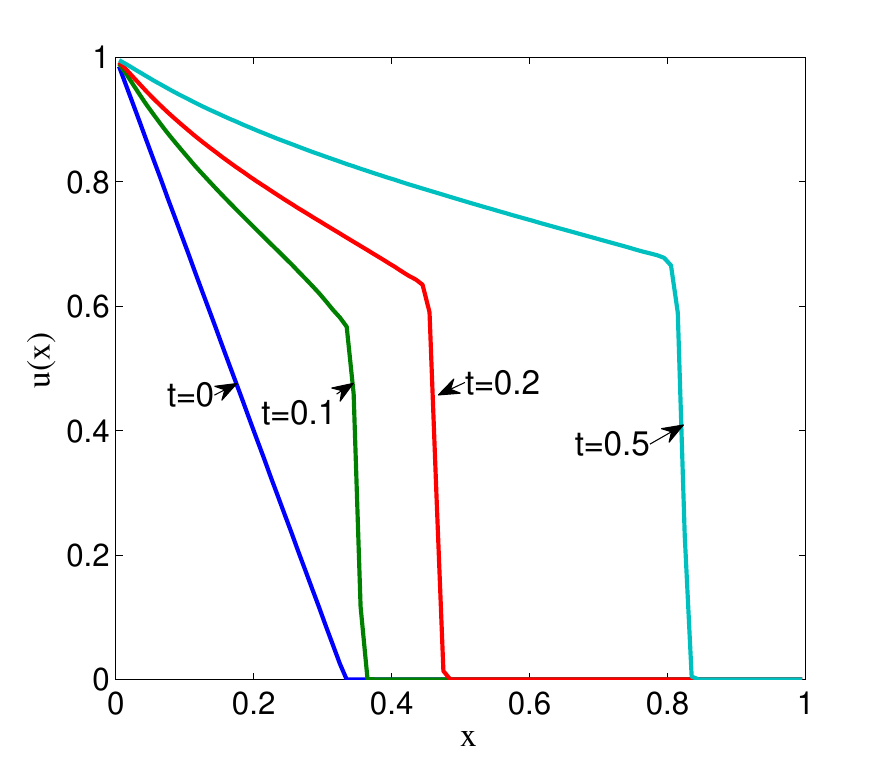}}
\caption{Example 10 - Evolution of the numerical solution for different values of $\varepsilon$.}
\label{ex8_1}
\end{figure}


\section{Conclusion}


In this article we have presented how to build a new finite volume scheme for nonlinear degenerate parabolic equations which admit an entropy functional. To this end, we rewrite the equation in the form of a convection equation, by taking the convective and diffusive parts into account together. Then we apply either the upwind method in the linear case or the local Lax-Friedrichs method in the nonlinear case.\\
On the one hand, this construction ensures that a particular type of steady-state is preserved. We obtain directly a semi-discrete entropy estimate, which is the first step to prove the large-time behavior of the numerical solution. On the other hand, we use a slope-limiter method to get second-order accuracy even in the degenerate case.\\
Numerical examples demonstrate high-order accuracy of the scheme. Moreover we have applied it to some of the physical models for which the long-time behavior has been studied: the porous media equation, the drift-diffusion system for semiconductors, the nonlinear Fokker-Planck equation for bosons and fermions. We obtain the convergence of the approximate solution to an approximation of the equilibrium state at an exponential rate. A future work would be to prove this exponential rate by using a discrete entropy/entropy dissipation estimate as in the continuous case compared with previous approaches.\\

\textbf{Acknowledgement:} This work was partially supported by the European Research Council ERC Starting Grant 2009, project 239983-NuSiKiMo. The authors thank the referee for suggesting the numerical test (Example 3).

\bibliographystyle{plain}
\bibliography{bibliographie}

\end{document}